\pgfplotsset{width=10cm,compat=1.9}
\g@addto@macro\bfseries{\boldmath}
\def\@setauthors{%
  \begingroup
  \def\thanks{\protect\thanks@warning}%
  \trivlist
  \centering\footnotesize \@topsep30\p@\relax
  \advance\@topsep by -\baselineskip
  \item\relax
  \author@andify\authors
  \def\\{\protect\linebreak}

  \normalsize\lowercase{\authors}%
  
	\ifx\@empty\contribs
  \else
    ,\penalty-3 \space \@setcontribs
    \@closetoccontribs
  \fi
  \endtrivlist
  \endgroup
}
\def\@settitle{\begin{center}
\LARGE\lowercase{\@title}
  \end{center}%
}
\newcommand{\authoremail}[1]{\email{\href{mailto:#1}{\color{lightblue}{#1}}}}
\newcommand{\authoraddress}[1]{\address{\normalfont{#1}}}
\numberwithin{equation}{section}
\newtheorem{thm}{Theorem}[section]
\newtheorem{lma}[thm]{Lemma}
\newtheorem{cor}[thm]{Corollary}
\newtheorem{defn}[thm]{Definition}
\newtheorem{prop}[thm]{Proposition}
\newtheorem{conj}[thm]{Conjecture}
\renewcommand{\epsilon}{\varepsilon}
\newcommand{\eps}{\varepsilon}
\renewcommand{\geq}{\geqslant}
\renewcommand{\leq}{\leqslant}
\newcommand{\hd}{\dim_{\textup{H}}}
\DeclareRobustCommand\widecheck[1]{{\mathpalette\@widecheck{#1}}}
\def\@widecheck#1#2{%
    \setbox\z@\hbox{\m@th$#1#2$}%
    \setbox\tw@\hbox{\m@th$#1%
       \widehat{%
          \vrule\@width\z@\@height\ht\z@
          \vrule\@height\z@\@width\wd\z@}$}%
    \dp\tw@-\ht\z@
    \@tempdima\ht\z@ \advance\@tempdima2\ht\tw@ \divide\@tempdima\thr@@
    \setbox\tw@\hbox{%
       \raise\@tempdima\hbox{\scalebox{1}[-1]{\lower\@tempdima\box
\tw@}}}%
    {\ooalign{\box\tw@ \cr \box\z@}}}
\newcommand\reallywidehat[1]{%
\savestack{\tmpbox}{\stretchto{%
  \scaleto{%
    \scalerel*[\widthof{\ensuremath{#1}}]{\kern.1pt\mathchar"0362\kern.1pt}%
    {\rule{0ex}{\textheight}}
  }{\textheight}%
}{2.4ex}}%
\stackon[-6.9pt]{#1}{\tmpbox}%
}
\definecolor{lightblue}{HTML}{2B77A4}
\colorlet{plotblue}{LightSkyBlue3!80}
\definecolor{darkred}{HTML}{9E0D0D}
\definecolor{purp}{HTML}{d603a9}
\definecolor{dartmouthgreen}{HTML}{00A64F}
\definecolor{Junglegreen}{HTML}{00A99A}
\definecolor{yellowcolour}{HTML}{f07c02}
\title{$L^p$ averages of the discrete Fourier transform and applications}
\author{\large{Jonathan M. Fraser}}
\thanks{JMF was  financially supported by an \emph{EPSRC Open Fellowship}  (EP/Z533440/1), a \emph{Leverhulme Trust Research Project Grant} (RPG-2023-281),  and an \emph{EPSRC Standard Grant} (EP/Y029550/1).}
 \author{\large{Firdavs Rakhmonov}}
\thanks{FR was  financially supported by a  \emph{Leverhulme Trust Research Project Grant} (RPG-2023-281).}
\date{}
\begin{document}
\thispagestyle{empty}

\begin{abstract}

The discrete Fourier transform has proven to be an essential tool in many geometric and combinatorial problems in vector spaces over finite fields. In general, sets with good uniform bounds for the Fourier transform appear more `random' and are easier to analyze. However, there is a trade-off: in many cases, obtaining good uniform bounds is not possible, even in situations where many points satisfy strong pointwise bounds.  To address this limitation, the first named author proposed an approach where one attempts to replace the need for \emph{uniform} ($L^\infty$) bounds with suitable bounds for the $L^p$ \emph{average} of the Fourier transform.  In subsequent joint work, the authors applied this approach successfully to improve known results in Fourier restriction and the study of orthogonal projections. In this survey we discuss this general approach, give several examples, and exhibit some of the recent applications.

\emph{Mathematics Subject Classification}: primary:  05B25, 42B10; secondary: 51A05, 28A78, 28A75.
\\
\emph{Key words and phrases}: Fourier transform, finite fields, vector spaces over finite fields, sumsets, distance problem, restriction problem, orthogonal projections.
\end{abstract}
\maketitle
\tableofcontents

\section{Introduction}

Discrete Fourier analysis has long been an important tool for solving geometric and combinatorial problems in the discrete setting. Perhaps its most significant application is in the resolution of the Erdős–Turán conjecture, which asserts that any subset of the integers with positive density contains arbitrarily long arithmetic progressions. In \cite{roth}, Roth provided a partial resolution of this conjecture for progressions of length three, using an original technique based on discrete Fourier analysis.

Many problems in Euclidean harmonic analysis and geometric measure theory have also been formulated in the setting of vector spaces over finite fields; see, for example, \cite{wolff, bourgain, iosevich, dvir}. The motivation for this transition from the continuous to the finite field model is that finite fields serve as a convenient analogue of the Euclidean case, with many technical difficulties eliminated. Moreover, finite field problems are closely connected to questions in number theory and combinatorics, and techniques from these areas can often be brought to bear. However, this simplification comes with a trade-off: certain familiar tools from the Euclidean setting are no longer available. The simplest example is that finite fields lack an ordering, unlike $\mathbb{R}$. There are also numerous other quirks and subtleties that play a role.  For example, in vector spaces over finite fields, there exist non-trivial spheres of radius zero, subspaces which coincide with their orthogonal complement, and spheres which contain non-trivial affine subspaces.

In an influential paper, Iosevich and Rudnev \cite{iosevich} applied discrete Fourier analysis to a discrete analogue of the Falconer distance problem in vector spaces over finite fields. Building on this approach, the first named author \cite{fraserfinite} introduced a more nuanced framework, where one considers the $L^p$ averages of the Fourier transform instead of considering only the maximum of the Fourier transform.  This approach has a number of applications:

\begin{enumerate}
\item In \cite{fraserfinite}, various examples and applications were considered. These applications included sumset-type problems, the finite field distance problem, and the problem of counting $k$-simplices.

\medskip

\item In \cite{fraser_rakhmonov_1}, we study the problem of bounding the number of exceptional projections (those that are smaller than typical) of a subset of a vector space over a finite field onto subspaces. We establish bounds that depend on $L^p$ estimates of the Fourier transform, thereby improving various known results for sets with sufficiently good Fourier analytic properties. The special case $p=2$ recovers a recent result of Bright and Gan (following Chen), which established the finite field analogue of well-known bounds of  Peres–Schlag   from the Euclidean setting. As a further consequence, we also obtain several auxiliary results of independent interest, including a character sum identity for subspaces (solving a problem of Chen) and a generalization of Plancherel’s theorem for subspaces.

\medskip

\item In \cite{fraser_rakhmonov_2}, we address the Stein--Tomas restriction problem in the finite field setting. Mockenhaupt and Tao \cite{moctao} established a finite field analogue of the Stein--Tomas  theorem, proving that $L^r \to L^2$ restriction estimates hold for a given measure $\mu$ on a vector space over a finite field within a certain range of exponents $r$. Their result was expressed in terms of uniform bounds on the measure and its Fourier transform. We generalize their result by replacing uniform bounds on the Fourier transform with suitable $L^p$ bounds, and we show that this refinement improves the Mockenhaupt–Tao range in many cases.
\end{enumerate}

Throughout the paper, the notation $A \lesssim B$ signifies that $A \leq c B$ for some constant $c > 0$ depending only on the ambient spatial dimension $n$. Similarly, we write $A \gtrsim B$ to mean $B \lesssim A$, and $A \approx B$ if both $A \lesssim B$ and $A \gtrsim B$ hold. We will use subscripts to indicate that the implicit constants depend on other parameters, such as $p$ and $s$ in \eqref{(p,s) Salem set}. The implicit constants will never depend on the size of the base field $\mathbb{F}_q$, which is $q$. We also write $A \gg B$ to denote the negation of $A \lesssim B$.

We write $r'$ for the Hölder conjugate of $r \in [1, \infty]$, i.e., the unique $r' \in [1, \infty]$ satisfying $\frac{1}{r} + \frac{1}{r'} = 1$. Additionally, we use $|X|$ to denote the cardinality of a finite set $X$ and $\mathbb{N}_0$ to denote the set of non-negative integers.

In conclusion, we emphasize that this is a survey paper. Our aim is to explain the $L^p$ averages framework, provide some motivation for its use, and discuss several examples. We aim to make the paper accessible  to a broad mathematical audience, rather than to present full technical proofs. Accordingly, we focus on making the underlying ideas clear and accessible, while omitting certain detailed proofs, which are technical and can be found in the existing literature; see, for example, \cite{fraserfinite, fraser_rakhmonov_1, fraser_rakhmonov_2, iosevich}.

\section{Basics of discrete Fourier analysis}

As can be seen from the abstract and the introduction, we make extensive use of the Fourier transform throughout this paper, since it is the central tool around which our main results revolve. In this short section, we provide the definition of the Fourier transform in the setting of finite fields, along with some of its fundamental properties.  

Throughout the paper, we let $\mathbb{F}_q$ denote the finite field with $q$ elements, where $q$ is a power of a prime. We use $\mathbb{F}_q^{\times}$ to denote the set of nonzero elements of $\mathbb{F}_q$. Given a finite field $\mathbb{F}_q$, we may also consider $\mathbb{F}_q^n$, the $n$-dimensional vector space over $\mathbb{F}_q$, for $n \geq 1$.

By a \emph{character}, we mean any group homomorphism $\chi : (\mathbb{F}_q, +) \to (S^1, \cdot)$, where  $S^1 \coloneqq \{z \in \mathbb{C} : |z| = 1\}$.  
Note that the mapping $\chi_0 : \mathbb{F}_q \to S^1$ defined by $\chi_0(x) = 1$ for every $x \in \mathbb{F}_q$ is also a character; it is called the \emph{trivial character}.

\begin{defn}
The Fourier and inverse Fourier transforms of a function $f : \mathbb{F}_q^n \to \mathbb{C}$ are the functions $\widehat f :  \mathbb{F}_q^n \to \mathbb{C}$ and $f^{\lor} : \mathbb{F}_q^n \to \mathbb{C}$, defined by
\begin{equation*}
\widehat f(\xi) \coloneqq \sum_{x \in \mathbb{F}_q^n} f(x) \chi(-\xi \cdot x), \qquad f^{\lor}(\xi) \coloneqq \sum_{x \in \mathbb{F}_q^n} f(x) \chi(\xi \cdot x),
\end{equation*}
where $\chi$ is a nontrivial character.
\end{defn}  

The specific choice of nontrivial character $\chi$ does not play an important role in what follows and we fix one particular choice throughout. Here, $\xi \cdot x$ denotes the usual dot product in $\mathbb{F}_q^n$, which is an element of $\mathbb{F}_q$. More precisely, if $\xi = (\xi_1, \dots, \xi_n) \in \mathbb{F}_q^n$ and $x = (x_1, \dots, x_n) \in \mathbb{F}_q^n$, then  
\[
\xi \cdot x \coloneqq \xi_1 x_1 + \dots + \xi_n x_n.  
\]  

The following identity shows that one can interchange the Fourier and inverse Fourier transforms: for every $x \in \mathbb{F}_q^n$, we have
\begin{equation} 
\label{inversion}
(\widehat{f})^{\lor}(x) = \widehat{{f^{\lor}}}(x) = q^n f(x).
\end{equation}

The following result, known as \emph{Parseval's theorem}, will be very useful throughout the paper:

\begin{thm}
\label{parseval's theorem}
If $f, g : \mathbb{F}_q^n \to \mathbb{C}$, and $\widehat{f}, \widehat{g} : \mathbb{F}_q^n \to \mathbb{C}$ are their Fourier transforms, respectively, then  
\begin{equation}
\label{parseval}
\sum_{\xi \in \mathbb{F}_q^n} \widehat f(\xi) \, \overline{\widehat g(\xi)} = q^n \sum_{x \in \mathbb{F}_q^n} f(x) \, \overline{g(x)}.
\end{equation}    
\end{thm}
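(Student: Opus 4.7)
The plan is a direct computation: expand both Fourier transforms by definition, interchange the order of summation, and collapse the resulting inner sum via orthogonality of characters. First, I substitute into the left-hand side of \eqref{parseval}. Since $\chi : (\mathbb{F}_q, +) \to (S^1, \cdot)$ is a group homomorphism into the unit circle, $\overline{\chi(t)} = \chi(-t)$, so $\overline{\chi(-\xi \cdot y)} = \chi(\xi \cdot y)$. All sums are finite, so I may swap them freely to obtain
\[
\sum_{\xi \in \mathbb{F}_q^n} \widehat f(\xi) \, \overline{\widehat g(\xi)} = \sum_{x, y \in \mathbb{F}_q^n} f(x) \overline{g(y)} \sum_{\xi \in \mathbb{F}_q^n} \chi\bigl(\xi \cdot (y - x)\bigr).
\]

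The heart of the argument is the orthogonality relation $\sum_{\xi \in \mathbb{F}_q^n} \chi(\xi \cdot z) = q^n \mathbf{1}_{\{z = 0\}}$ for $z \in \mathbb{F}_q^n$. The case $z = 0$ is immediate. For $z \neq 0$, the map $\xi \mapsto \xi \cdot z$ is a surjective homomorphism from $\mathbb{F}_q^n$ onto $\mathbb{F}_q$ with every fibre of size $q^{n-1}$, so the sum reduces to $q^{n-1} \sum_{t \in \mathbb{F}_q} \chi(t)$. This last sum vanishes because $\chi$ is nontrivial: choosing $s \in \mathbb{F}_q$ with $\chi(s) \neq 1$ and reindexing by translation yields $\chi(s) \sum_t \chi(t) = \sum_t \chi(s + t) = \sum_t \chi(t)$, forcing $\sum_t \chi(t) = 0$.

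Inserting this orthogonality into the double sum above kills all off-diagonal terms, leaving only $y = x$ with weight $q^n$, which gives the right-hand side $q^n \sum_{x \in \mathbb{F}_q^n} f(x) \overline{g(x)}$.

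There is no real obstacle: the whole argument is essentially an exercise in character sums, and the only non-trivial ingredient — orthogonality of characters — is a one-line consequence of nontriviality combined with translation invariance of the total sum over $(\mathbb{F}_q, +)$. For a cleaner exposition I would extract the orthogonality relation as a named lemma, since the same identity is also the engine behind the inversion formula \eqref{inversion} and essentially every other identity one needs in discrete Fourier analysis over $\mathbb{F}_q^n$.
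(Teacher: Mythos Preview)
Your proof is correct and is exactly the standard argument. The paper itself does not give a proof of this theorem---it is simply stated as a known result---so there is nothing to compare against; your expand-and-use-orthogonality approach is precisely what one would write down, and your suggestion to extract the orthogonality relation as a lemma is sensible since the paper later uses $\sum_{x \in \mathbb{F}_q} \chi(x) = 0$ explicitly.
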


By taking $f = g$ in \eqref{parseval}, we obtain the following fundamental result relating the $L^2$ norms of a function $f$ and its Fourier transform $\widehat{f}$, which is known as \emph{Plancherel's theorem}.  

\begin{thm}  
If $f : \mathbb{F}_q^n \to \mathbb{C}$ and $\widehat{f} : \mathbb{F}_q^n \to \mathbb{C}$ is its Fourier transform, then  
\begin{equation}  
\label{Plancherel's thm}  
\sum_{\xi \in \mathbb{F}_q^n} |\widehat{f}(\xi)|^2 = q^n \sum_{x \in \mathbb{F}_q^n} |f(x)|^2.  
\end{equation}  
\end{thm}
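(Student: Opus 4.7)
The plan is essentially immediate: Plancherel's theorem is a direct specialization of Parseval's theorem (Theorem~\ref{parseval's theorem}), so the strategy is simply to substitute $g = f$ into the identity \eqref{parseval} and simplify.

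More precisely, I would start from
\[
\sum_{\xi \in \mathbb{F}_q^n} \widehat{f}(\xi) \, \overline{\widehat{g}(\xi)} = q^n \sum_{x \in \mathbb{F}_q^n} f(x) \, \overline{g(x)},
\]
which holds for arbitrary $f, g : \mathbb{F}_q^n \to \mathbb{C}$, and then set $g = f$. On the left-hand side, $\widehat{f}(\xi)\,\overline{\widehat{f}(\xi)} = |\widehat{f}(\xi)|^2$, and on the right-hand side, $f(x)\,\overline{f(x)} = |f(x)|^2$. These two observations together yield \eqref{Plancherel's thm}.

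Since the derivation collapses to a one-line substitution, there is no genuine obstacle — the real content sits in Parseval's theorem itself, which the excerpt permits me to assume. Were one instead asked to prove Parseval's theorem from scratch, the key step would be to expand $\widehat{f}(\xi)\,\overline{\widehat{g}(\xi)}$ as a double sum over $x, y \in \mathbb{F}_q^n$ and interchange the order of summation, invoking the orthogonality relation $\sum_{\xi} \chi(\xi \cdot (y - x)) = q^n$ if $x = y$ and $0$ otherwise; that orthogonality would be the only nontrivial ingredient. But for the statement as given, substitution $f = g$ in \eqref{parseval} is all that is required.
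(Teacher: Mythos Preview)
Your proposal is correct and matches the paper's own derivation exactly: the paper states that Plancherel's theorem is obtained ``by taking $f = g$ in \eqref{parseval}.'' The additional remark about how one would prove Parseval from scratch is accurate but unnecessary here.
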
  

For every subset $E \subseteq \mathbb{F}_q^n$, we define $E(x)$ to be the indicator function of $E$, that is,  
\begin{equation*}  
E(x) =  
\begin{cases}  
1, & \text{if } x \in E, \\  
0, & \text{if } x \notin E.  
\end{cases}  
\end{equation*}  
In particular, by applying \eqref{Plancherel's thm} to the indicator function of $E$, we obtain the following useful identity:  
\begin{equation}  
\label{Plancherel's theorem for E}  
\sum_{\xi \in \mathbb{F}_q^n} |\widehat{E}(\xi)|^2 = q^n |E|.  
\end{equation}

Let us briefly consider what we might hope to learn from $\widehat{E}$. First, observe that
\[
|\widehat{E}(\xi)| \leq |\widehat{E}(0)| = |E|
\]
for all $\xi \in \mathbb{F}_q^n$, and, applying Plancherel's theorem \eqref{Plancherel's theorem for E}, we obtain
\[
q^n |E| = \sum_{\xi \in \mathbb{F}_q^n} |\widehat{E}(\xi)|^2 \leq |E|^2 + (q^n-1) \sup_{\xi \neq 0} |\widehat{E}(\xi)|^2.
\]
Therefore, provided $|E| \leq c q^n$ for some fixed $c \in (0,1)$, we have
\[
|E|^{\frac{1}{2}} \lesssim \sup_{\xi \neq 0} |\widehat{E}(\xi)| \leq |E|.
\]

Do we expect the largest non-zero Fourier coefficient to be close to $|E|$ or $|E|^{\frac{1}{2}}$? In fact, both are possible, and precisely where it lies in this range tells us a lot about the structure of $E$. If the largest non-zero Fourier coefficient is small (close to $|E|^{\frac{1}{2}}$), this indicates that the Fourier transform has experienced significant cancellation and therefore that $E$ is rather unstructured, i.e., almost random. On the other hand, if the largest non-zero Fourier coefficient is large (close to $|E|$), this indicates that the Fourier transform has \emph{not} experienced much cancellation (at least for some frequency $\xi \neq 0$), and there should be a good reason for this, such as $E$ being highly structured in a way that prevents cancellation.

Iosevich and Rudnev \cite{iosevich} call $E$ a \emph{Salem set} if
\[
\sup_{\xi \neq 0} |\widehat{E}(\xi)| \lesssim |E|^{\frac{1}{2}},
\]
and such sets $E$ should be thought of as being optimal from a Fourier-analytic point of view. They are `as random or unstructured as possible', and this can often be leveraged to deduce further geometric or combinatorial properties of $E$.

Our main question is as follows. Suppose $E$ is unstructured or random, but not enough to be a Salem set. What can we say about $E$? Can we use bounds such as
\[
\sup_{\xi \neq 0} |\widehat{E}(\xi)| \lesssim |E|^{\frac{3}{4}}
\]
to establish desired geometric conclusions? Or, perhaps, can we instead replace the need for uniform control of the Fourier coefficients with control of a suitable $L^p$ average? This was the novel approach introduced in \cite{fraserfinite}, and we will explore this problem in the next section and throughout the paper.

\section{\texorpdfstring{$L^p$}{Lp} averages of the Fourier transform}

In this subsection, we introduce the $L^p$ averages of the Fourier transform, following \cite{fraserfinite}, and this becomes our main object of interest. We establish the necessary framework to capture the Fourier analytic behavior of a set $E \subseteq \mathbb{F}_q^n$.

\begin{defn}
If $E \subseteq \mathbb{F}_q^n$ and $p \in [1,\infty]$, then we define the $p$-norm of its Fourier transform as follows:
\begin{equation}
\label{p-norm of FT}
\text{for } p\in [1,\infty), \quad \lVert \widehat{E}\rVert_p \coloneqq \Bigg(q^{-n} \sum_{\xi \neq 0} |\widehat{E}(\xi)|^p \Bigg)^{\frac{1}{p}},
\end{equation}
\begin{equation}
\label{inf-norm of FT}
\text{for } p = \infty, \quad \lVert \widehat{E}\rVert_{\infty} \coloneqq \sup_{\xi \neq 0} |\widehat{E}(\xi)|.
\end{equation}
\end{defn}

Notice that in the definition of the $p$-norm, we specifically exclude the origin in both \eqref{p-norm of FT} and \eqref{inf-norm of FT} to avoid certain technical issues.

\begin{defn}
For $E \subseteq \mathbb{F}_q^n$, $p \in [1,\infty]$, and $s \in [0,1]$, we say that $E$ is a $(p,s)$-Salem set if
\begin{equation}
\label{(p,s) Salem set}
\lVert \widehat{E}\rVert_p \lesssim_{p,s} |E|^{1-s}.
\end{equation}
\end{defn}

Observe that being an $(\infty, \frac{1}{2})$-Salem set is equivalent to being a Salem set in the Iosevich--Rudnev sense, and is therefore $(p, \frac{1}{2})$-Salem for every $p \in [1,\infty]$.

In general, it is of interest to determine the range of $s$ for which a given set is a $(p,s)$-Salem set. It is worth noting that the property of being a $(p,s)$-Salem set exhibits a certain concavity property, which turns out to be very useful, as reflected in the following result. 

\begin{prop}
\label{concavity}
If $E \subseteq \mathbb{F}_q^n$ is both a $(p_0,s_0)$-Salem set and a $(p_1,s_1)$-Salem set for some $1 \leq p_0 < p_1 < \infty$, then it is a $(p,s)$-Salem set for every $p \in [p_0, p_1]$, with
\begin{equation*}
s \coloneqq s_0 \frac{p_0(p_1-p)}{p(p_1-p_0)} + s_1 \frac{p_1(p-p_0)}{p(p_1-p_0)}.    
\end{equation*}
\end{prop}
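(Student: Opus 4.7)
The plan is to recognize Proposition \ref{concavity} as a direct manifestation of the standard log-convexity of $L^p$ norms (equivalently, Hölder's inequality applied to $|\widehat{E}|^p$ against the measure $q^{-n}$ times counting measure on $\mathbb{F}_q^n \setminus \{0\}$). I would first introduce the interpolation parameter $\theta \in [0,1]$ defined by
\[
\frac{1}{p} = \frac{\theta}{p_0} + \frac{1-\theta}{p_1},
\]
and then apply Hölder's inequality with exponents $p_0/(p\theta)$ and $p_1/(p(1-\theta))$, whose reciprocals sum to $1$ by construction, to obtain the log-convexity bound
\[
\lVert \widehat{E}\rVert_p \leq \lVert \widehat{E}\rVert_{p_0}^{\theta}\, \lVert \widehat{E}\rVert_{p_1}^{1-\theta}.
\]

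Next, I would invoke the hypothesis that $E$ is both $(p_0,s_0)$-Salem and $(p_1,s_1)$-Salem, giving
\[
\lVert \widehat{E}\rVert_p \lesssim_{p_0,s_0,p_1,s_1} |E|^{(1-s_0)\theta}\,|E|^{(1-s_1)(1-\theta)} = |E|^{1 - (s_0 \theta + s_1(1-\theta))}.
\]
This identifies $E$ as a $(p,s)$-Salem set with $s = s_0 \theta + s_1(1-\theta)$.

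The only remaining task is bookkeeping: solving the defining equation for $\theta$ to get
\[
\theta = \frac{\tfrac{1}{p} - \tfrac{1}{p_1}}{\tfrac{1}{p_0} - \tfrac{1}{p_1}} = \frac{p_0(p_1-p)}{p(p_1-p_0)}, \qquad 1-\theta = \frac{p_1(p-p_0)}{p(p_1-p_0)},
\]
and substituting into $s = s_0\theta + s_1(1-\theta)$ yields exactly the formula in the statement. There is no real obstacle here — the proposition is essentially a reformulation of the three-lines lemma / Riesz--Thorin heuristic at the level of $L^p$ norms, and the only thing to be careful about is that $\theta$ indeed lies in $[0,1]$ when $p \in [p_0,p_1]$, which is immediate from monotonicity of $1/p$, and that the restriction to $\xi \neq 0$ is preserved since Hölder is applied on the same index set $\mathbb{F}_q^n \setminus \{0\}$ with the same measure throughout.
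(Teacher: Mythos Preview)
Your proof is correct: the proposition is exactly log-convexity of the $L^p(\mu)$ norms (with $\mu = q^{-n}$ times counting measure on $\mathbb{F}_q^n\setminus\{0\}$), and your application of H\"older with exponents $p_0/(p\theta)$ and $p_1/(p(1-\theta))$ together with the bookkeeping for $\theta$ is clean and accurate. The paper does not give its own argument here but simply refers to \cite[Proposition~2.1]{fraserfinite}; your proof is the standard interpolation argument one expects to find there.
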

\begin{proof}
See Proposition 2.1 in \cite{fraserfinite}.
\end{proof}

It is immediate that any set $E \subseteq \mathbb{F}_q^n$ is a $(p,0)$-Salem set for all $p \in [1,\infty]$. Moreover, from \eqref{Plancherel's theorem for E}, one immediately sees that every set $E \subseteq \mathbb{F}_q^n$ is a $(2,\frac{1}{2})$-Salem set. We can interpolate between this and the trivial bound at $\infty$ to obtain the following result, which also follows as a direct consequence of Proposition \ref{concavity}.

\begin{cor}
\label{p,1/p corolalry}
If $E \subseteq \mathbb{F}_q^n$ and $p \in [2,\infty]$, then $E$ is a $\big(p, \frac{1}{p}\big)$-Salem set.
\end{cor}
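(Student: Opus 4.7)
The plan is to obtain the result by interpolating between the endpoints $p=2$ and $p=\infty$. At $p=2$, Plancherel's identity \eqref{Plancherel's theorem for E} applied to the indicator function of $E$ gives
\[
\lVert \widehat{E}\rVert_2 \;=\; \bigg(q^{-n}\sum_{\xi\neq 0}|\widehat E(\xi)|^2\bigg)^{1/2}\leq |E|^{1/2},
\]
so $E$ is $(2,\tfrac{1}{2})$-Salem. At $p=\infty$, the trivial pointwise bound $|\widehat{E}(\xi)|\leq |E|$ shows that $E$ is $(\infty,0)$-Salem. These are the only two inputs we need.

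For $p\in(2,\infty)$, rather than attempting to invoke Proposition \ref{concavity} directly (which is stated only for finite $p_1$ and would require a limiting argument in $p_1$), I would just perform the underlying interpolation by hand. Factoring out a power of the $L^\infty$ norm from the sum defining $\lVert\widehat E\rVert_p^p$ and then applying Plancherel gives
\[
q^{-n}\sum_{\xi\neq 0}|\widehat E(\xi)|^p \;\leq\; q^{-n}\,\lVert\widehat E\rVert_\infty^{\,p-2}\sum_{\xi\neq 0}|\widehat E(\xi)|^2 \;\leq\; q^{-n}\cdot |E|^{p-2}\cdot q^n|E| \;=\; |E|^{p-1}.
\]
Taking $p$-th roots yields $\lVert\widehat E\rVert_p \leq |E|^{1-1/p}$, which is precisely the $(p,1/p)$-Salem condition with implicit constant $1$ (in particular, independent of $q$).

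There is no substantive obstacle in this argument; the only mild subtlety worth flagging is that Proposition \ref{concavity} is formulated with $p_1<\infty$, so one cannot plug $(p_1,s_1)=(\infty,0)$ into it as a black box. Either one passes to a limit $p_1\to\infty$ in the interpolation formula (checking that the exponent indeed tends to $1/p$) or, as above, one simply repeats the one-line Hölder-type step directly using the trivial $L^\infty$ bound and Plancherel. The direct approach is shorter and avoids the limiting technicality, so I would present it in that form.
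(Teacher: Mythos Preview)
Your proof is correct and follows essentially the same approach as the paper: both interpolate between the $(2,\tfrac12)$-Salem bound coming from Plancherel and the trivial $(\infty,0)$-Salem bound. Your observation that Proposition~\ref{concavity} is only stated for finite $p_1$, and your decision to carry out the one-line interpolation directly, is a reasonable way to handle this minor technical point.
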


A natural question now is: for which sets $E$ can we beat the trivial $(p,\frac{1}{p})$ bound? We will provide plenty of examples in the next section.

\section{Examples}

In this section, we describe several examples and provide information about their Fourier transforms in the sense of $L^p$ averages.

The following two subsets of $\mathbb{F}_q^n$  are important examples: the \textit{sphere} $S^{n-1}_r$ of radius $r\in \mathbb{F}_q$ and the \textit{paraboloid} $P$, defined as follows:
\begin{equation*}
    S_r^{n-1} \coloneqq \{(x_1,\dots,x_n)\in \mathbb{F}_q^n : x_1^2+\dots+x_n^2=r\},
\end{equation*}
\begin{equation*}
    P \coloneqq \{(x_1,\dots,x_{n-1},y)\in \mathbb{F}_q^n : x_1^2+\dots+x_{n-1}^2=y\}.
\end{equation*}

The simplest geometric objects, such as lines, circles, and parabolas, which we are accustomed to seeing in the standard way in $\mathbb{R}^n$, look completely different in $\mathbb{F}_q^n$. For example, Figures~\ref{fig:line},~\ref{fig:circle}, and~\ref{fig:parabola} visualize a line, a circle, and a parabola in $\mathbb{F}_{23}^2$.

\begin{figure}[htp]
    \centering
    \includegraphics[width=10cm]{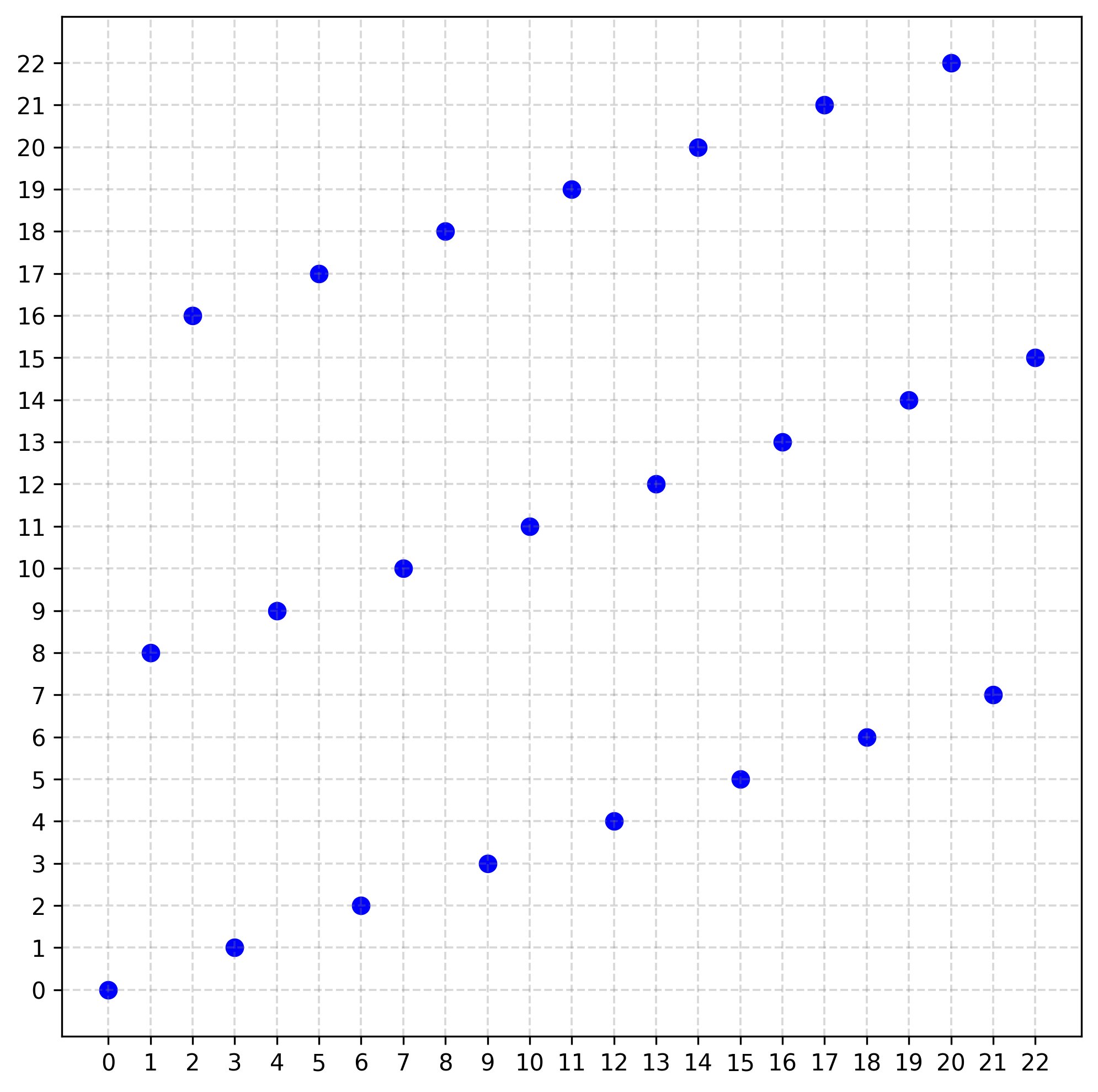}
    \caption{Line through $(0,0)$ generated by $(3,1)$ in $\mathbb{F}_{23}^2$.}
    \label{fig:line}
\end{figure}

\begin{figure}[htp]
    \centering
    \includegraphics[width=10cm]{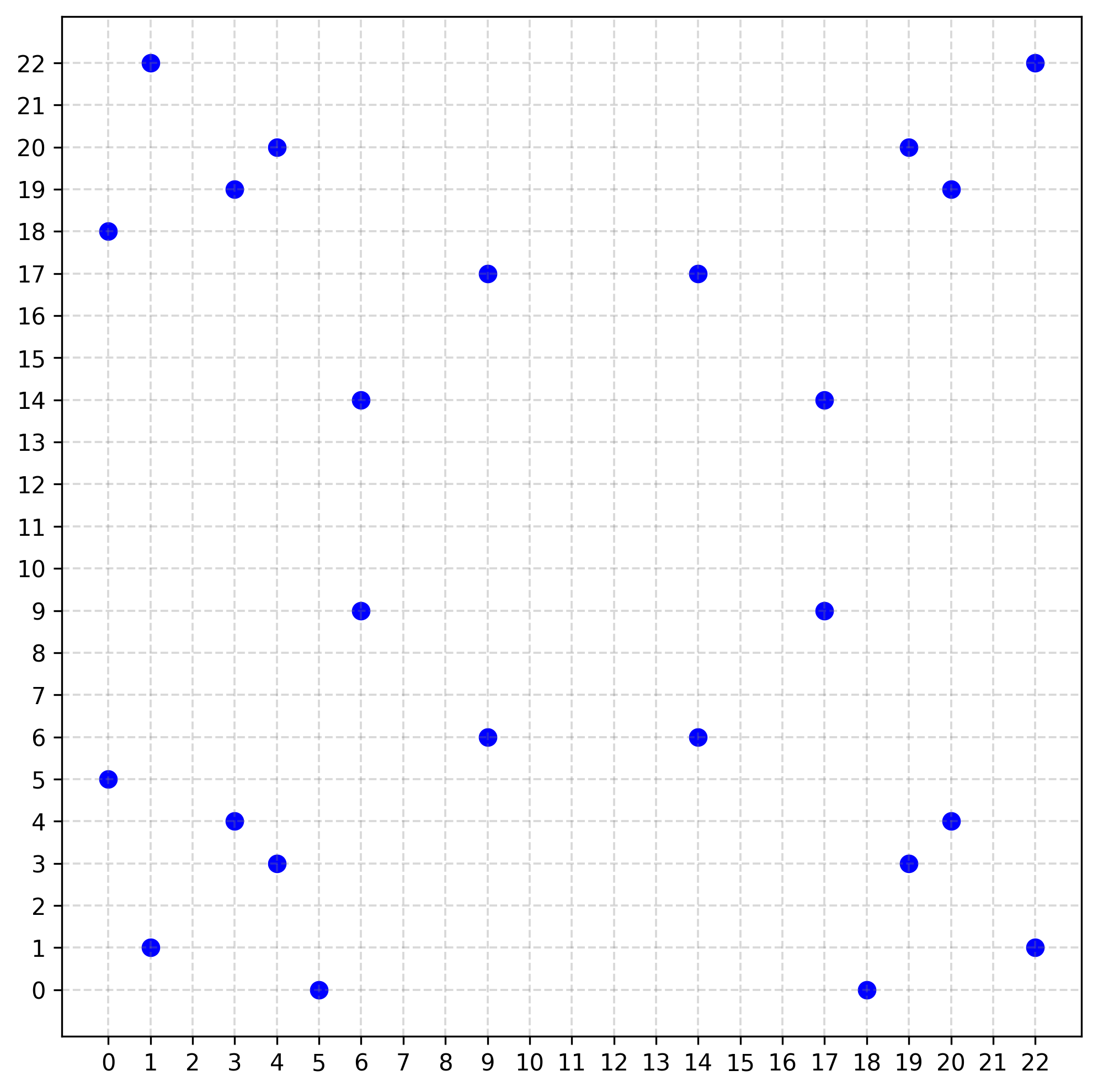}
    \caption{Circle of radius $2$ centered at $(0,0)$ in $\mathbb{F}_{23}^2$.}
    \label{fig:circle}
\end{figure}

\begin{figure}[htp]
    \centering
    \includegraphics[width=10cm]{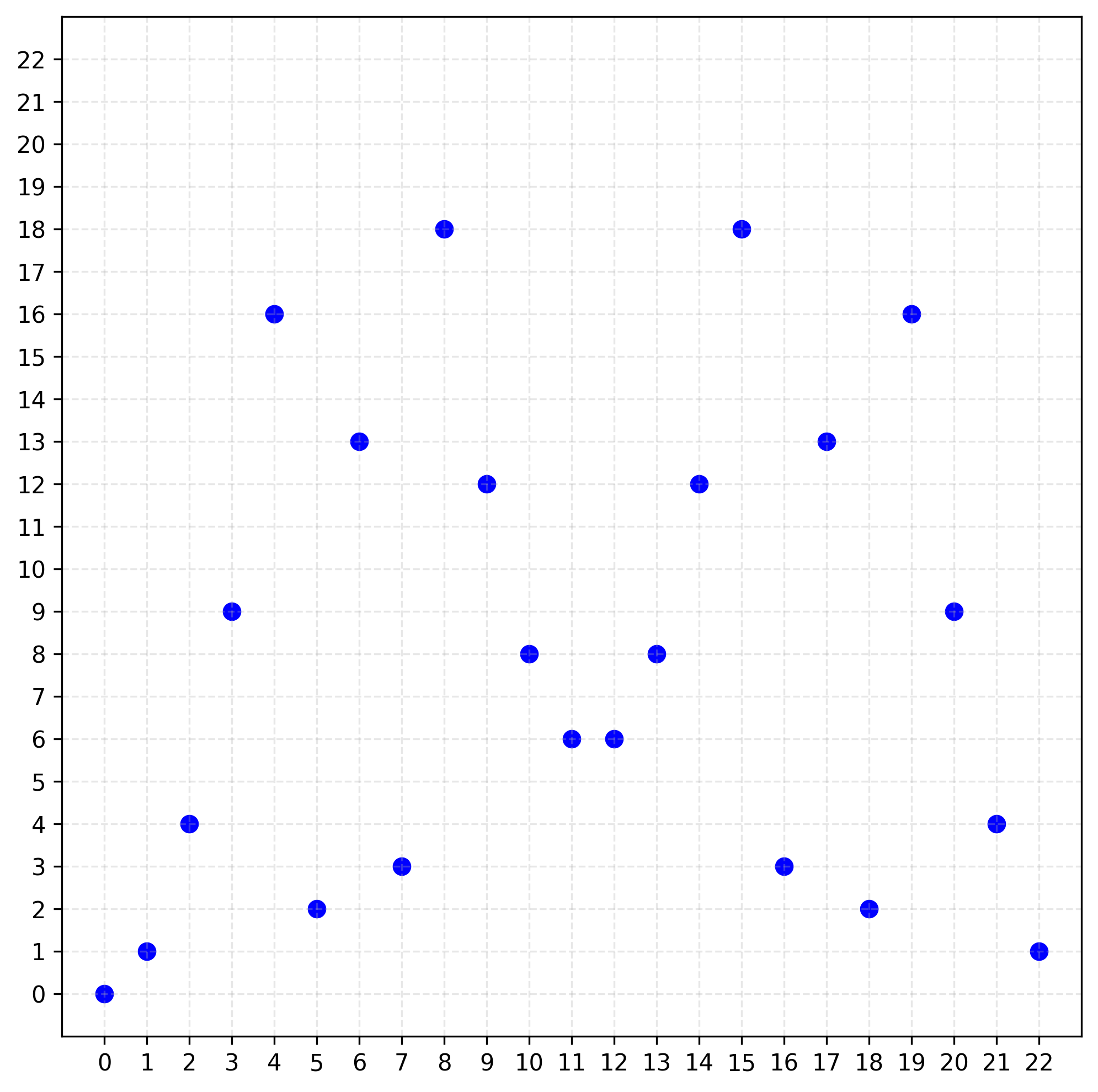}
    \caption{Parabola in $\mathbb{F}_{23}^2$.}
    \label{fig:parabola}
\end{figure}

We observe that $|S_r^{n-1}| \approx q^{n-1}$ for $r \in \mathbb{F}_q^{\times}$, as justified by the following result, which is a consequence of Theorems 6.26 and 6.27 in \cite{lidl}.

\begin{lma}
    Let $\eta$ be the quadratic character of $\mathbb{F}_q^{\times}$ with $\eta(0)=0$. Define $\lambda(r)=-1$ for $r\in \mathbb{F}_q^{\times}$ and $\lambda(0)=q-1$. Then:
    \begin{enumerate}
        \item If $n\geq 2$ is even, then
    \begin{equation}
    \label{size_of_sphere_even}
        |S_r^{n-1}| = q^{n-1} + \lambda(r)\, q^{\frac{n-2}{2}}\eta((-1)^{\frac{n}{2}}).
    \end{equation}
        \item If $n\geq 3$ is odd, then
    \begin{equation}
    \label{size_of_sphere_odd}
        |S_r^{n-1}| = q^{n-1} + q^{\frac{n-1}{2}}\eta((-1)^{\frac{n-1}{2}}r).
    \end{equation}
    \end{enumerate}
\end{lma}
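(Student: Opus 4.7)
The plan is to reduce the sphere count to a character sum and then evaluate it via standard Gauss sum computations. Let $\chi$ be a fixed nontrivial additive character of $\mathbb{F}_q$. By orthogonality of characters,
\begin{equation*}
\mathbf{1}[y = 0] = \frac{1}{q} \sum_{t \in \mathbb{F}_q} \chi(ty) \qquad (y \in \mathbb{F}_q),
\end{equation*}
so I would first write
\begin{equation*}
|S_r^{n-1}| = \sum_{x \in \mathbb{F}_q^n} \mathbf{1}\bigl[x_1^2 + \dots + x_n^2 - r = 0\bigr] = q^{n-1} + \frac{1}{q} \sum_{t \in \mathbb{F}_q^\times} \chi(-tr) \prod_{i=1}^n \sum_{x_i \in \mathbb{F}_q} \chi(t x_i^2),
\end{equation*}
separating off the $t = 0$ contribution of $q^{n-1}$.

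The next step is to evaluate the one-variable quadratic sum. Letting $\eta$ be the quadratic character and $G \coloneqq \sum_{s \in \mathbb{F}_q^\times} \eta(s) \chi(s)$ the associated quadratic Gauss sum, a standard change of variable gives $\sum_{x \in \mathbb{F}_q} \chi(t x^2) = \eta(t) G$ for $t \neq 0$. Substituting this back produces
\begin{equation*}
|S_r^{n-1}| = q^{n-1} + \frac{G^n}{q} \sum_{t \in \mathbb{F}_q^\times} \chi(-tr) \, \eta(t)^n.
\end{equation*}
At this point I would invoke the two facts I will need: $G^2 = \eta(-1) q$, hence $|G|^2 = q$, and the multiplicativity $\eta(-1)^k = \eta((-1)^k)$.

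The two cases of the lemma now split cleanly. When $n$ is even, $\eta(t)^n \equiv 1$, so the inner sum is $\sum_{t \neq 0} \chi(-tr)$, which equals $-1$ when $r \neq 0$ and $q-1$ when $r = 0$; this is precisely $\lambda(r)$. Combined with $G^n = (\eta(-1) q)^{n/2} = \eta((-1)^{n/2}) q^{n/2}$, the formula \eqref{size_of_sphere_even} falls out. When $n$ is odd, $\eta(t)^n = \eta(t)$, and I would substitute $u = -tr$ (for $r \neq 0$) to rewrite the inner sum as $\eta(-1)\eta(r) \, G$; for $r = 0$ it vanishes because $\sum_{t \neq 0} \eta(t) = 0$. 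Using $G^{n+1} = \eta(-1)^{(n+1)/2} q^{(n+1)/2}$ and simplifying the exponent of $\eta(-1)$ modulo $2$ yields $\eta((-1)^{(n-1)/2} r) \, q^{(n-1)/2}$, which is exactly \eqref{size_of_sphere_odd}.

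The only mildly delicate step is the evaluation $\sum_x \chi(tx^2) = \eta(t) G$ together with $G^2 = \eta(-1) q$; these are the content of Theorems 6.26 and 6.27 of \cite{lidl} and I would simply cite them. Everything else is careful bookkeeping of signs and parities of $n/2$ or $(n-1)/2$, which is where a reader is most likely to slip.
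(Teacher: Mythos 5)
Your proof is correct, and it is the standard Gauss-sum computation: the paper itself does not prove this lemma but simply cites Theorems 6.26 and 6.27 of Lidl--Niederreiter, whose proofs proceed via exactly the same decomposition you give (additive-character orthogonality, $\sum_x \chi(tx^2) = \eta(t)G$, and $G^2 = \eta(-1)q$). Your parity bookkeeping in the odd case works out: the accumulated power of $\eta(-1)$ is $\frac{n+1}{2}+1 = \frac{n+3}{2}$, which agrees with $\frac{n-1}{2}$ modulo $2$, and the $r=0$ subcase is handled correctly since $\eta(0)=0$ on both sides.
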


The following result shows that spheres of nonzero radius and the paraboloid in $\mathbb{F}_q^n$ are Salem sets.

\begin{prop}
The sphere $S_r^{n-1}$ $(r\in \mathbb{F}_q^{\times})$ and the paraboloid $P$ are $(\infty, \tfrac{1}{2})$-Salem sets.
\end{prop}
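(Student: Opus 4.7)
The plan is to compute both Fourier transforms essentially explicitly using Gauss sum techniques, exploiting the fact that both the sphere and paraboloid are zero-sets of a quadratic form.

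For the paraboloid, I would start by parameterizing $P$ by $x' \in \mathbb{F}_q^{n-1}$ via $(x',|x'|^2)$, so that
\[
\widehat{P}(\xi) = \sum_{x' \in \mathbb{F}_q^{n-1}} \chi\bigl(-\xi' \cdot x' - \xi_n |x'|^2\bigr),
\]
where $\xi = (\xi',\xi_n)$. If $\xi_n = 0$ then $\xi' \neq 0$ (since $\xi \neq 0$) and the sum vanishes by orthogonality. If $\xi_n \neq 0$, I would complete the square in $x'$ to reduce the exponent to a quadratic form plus a phase depending only on $\xi$, producing a factor $G(-\xi_n)^{n-1}$, where $G(t) := \sum_{s \in \mathbb{F}_q} \chi(ts^2)$ is the standard one-variable Gauss sum. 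Since $|G(t)| = q^{1/2}$ for $t \neq 0$, we obtain $|\widehat{P}(\xi)| \leq q^{(n-1)/2} = |P|^{1/2}$, as required.

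For the sphere, the key trick is to detect the constraint $|x|^2 = r$ by the identity
\[
S_r^{n-1}(x) = \frac{1}{q} \sum_{t \in \mathbb{F}_q} \chi\bigl(t(|x|^2 - r)\bigr),
\]
so that
\[
\widehat{S_r^{n-1}}(\xi) = \frac{1}{q} \sum_{t \in \mathbb{F}_q} \chi(-tr) \sum_{x \in \mathbb{F}_q^n} \chi\bigl(t|x|^2 - \xi \cdot x\bigr).
\]
The $t=0$ term vanishes since $\xi \neq 0$. For $t \neq 0$, completing the square in $x$ yields
\[
\sum_{x \in \mathbb{F}_q^n} \chi\bigl(t|x|^2 - \xi \cdot x\bigr) = \chi\!\left(-\frac{|\xi|^2}{4t}\right) G(t)^n,
\]
where $|\xi|^2 := \xi \cdot \xi$. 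Using $G(t) = \eta(t)\,G(1)$ with $|G(1)| = q^{1/2}$ and $\eta$ the quadratic character, this reduces the problem to bounding
\[
\bigl|\widehat{S_r^{n-1}}(\xi)\bigr| = \frac{q^{n/2}}{q}\left| \sum_{t \in \mathbb{F}_q^\times} \eta(t)^n \, \chi\!\left(-tr - \frac{|\xi|^2}{4t}\right) \right|.
\]

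The main obstacle is bounding this last exponential sum, which for $n$ even is a Kloosterman sum and for $n$ odd a Salié-type sum; in either case, provided $r \neq 0$ and $|\xi|^2 \neq 0$, the Weil bound gives an estimate of order $q^{1/2}$, yielding $|\widehat{S_r^{n-1}}(\xi)| \lesssim q^{(n-1)/2} \approx |S_r^{n-1}|^{1/2}$. For the degenerate frequencies where $|\xi|^2 = 0$ (which exist in $\mathbb{F}_q^n$ even for $\xi \neq 0$), the sum over $t$ collapses to a character sum bounded by $O(1)$, giving an even better bound of order $q^{(n-2)/2}$. Taking the maximum over $\xi \neq 0$ completes the proof; the explicit identities can be found in \cite{iosevich} and \cite{fraserfinite}.
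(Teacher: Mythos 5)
Your proof follows essentially the same route as the argument in \cite{iosevich} that the paper cites (Lemma 2.2 and Example 4.1 there): for the paraboloid, parameterize and complete the square to get a product of one-variable Gauss sums $G(-\xi_n)^{n-1}$ of modulus $q^{(n-1)/2}$; for the sphere, use the indicator identity in $t$, complete the square in $x$, factor out $G(t)^n = \eta(t)^n G(1)^n$, and bound the remaining twisted exponential sum in $t$ by the Weil estimate for Kloosterman/Sali\'e sums.

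One small slip worth correcting: in the degenerate case $|\xi|^2 = 0$, $\xi \neq 0$, the sum $\sum_{t \in \mathbb{F}_q^\times} \eta(t)^n \chi(-tr)$ is $O(1)$ only when $n$ is even (it equals $-1$). When $n$ is odd it is a full Gauss sum $\sum_{t\neq 0}\eta(t)\chi(-tr)$ of magnitude $q^{1/2}$, so the resulting bound on $|\widehat{S_r^{n-1}}(\xi)|$ is $q^{(n-1)/2}$, not $q^{(n-2)/2}$ as you claim. This is still $\lesssim |S_r^{n-1}|^{1/2}$, so the conclusion survives, but the intermediate assertion is imprecise. Also note that the entire computation (completing the square, $|G(t)| = q^{1/2}$) implicitly requires $q$ odd, which is the standing assumption in \cite{iosevich} and should be stated.
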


\begin{proof}
 See Lemma 2.2 and Example 4.1 in \cite{iosevich}.
\end{proof}

Given the heuristic description of Salem sets above, a natural observation about the spheres $S_r^{n-1}$ ($r \in \mathbb{F}_q^{\times}$) is that they are neither random nor unstructured, despite being Salem. However, the real `enemy' of Fourier decay is linear structure, and one might argue that these spheres are unstructured from a linear point of view, or that they `appear' random from a Fourier-analytic perspective.

The case of the sphere of radius zero, $S_0^{n-1}$, is particularly interesting in $\mathbb{F}_q^n$ compared with $\mathbb{R}^n$. In $\mathbb{R}^n$, the zero-radius sphere is simply $S_0^{n-1} = \{0\}$. However, in $\mathbb{F}_q^n$ the situation is quite different: it may happen that $S_0^{n-1} \neq \{0\}$. For example, if $n\geq 3$ is odd and $-1$ is a square in $\mathbb{F}_q$ (i.e., $\eta(-1)=1$), then \eqref{size_of_sphere_even} gives $|S_0^{n-1}| = q^{n-1}$. On the other hand, in some cases the sphere is trivial; for instance, if $-1$ is not a square in $\mathbb{F}_q$ (i.e., $\eta(-1)=-1$), then $S_0^1\subseteq \mathbb{F}_q^2$ is trivial by \eqref{size_of_sphere_even}.

Unlike spheres of nonzero radius, $S_0^{n-1}$ is \textit{not} a Salem set. Nevertheless, one can show that for $n\geq 3$, the sphere $S_0^{n-1}$ exhibits good Fourier analytic behavior, which can be captured via the $L^p$ averages approach.

\begin{prop}
\label{sphere_zero_salem}
    Suppose that either $n\geq 3$, or $n=2$ and $-1$ is a square in $\mathbb{F}_q$. Then the sphere $S_0^{n-1}$ is a $(p,s)$-Salem set if and only if 
    \begin{equation*}
        s \leq \frac{n-2}{2(n-1)} + \frac{1}{p(n-1)}. 
    \end{equation*}
\end{prop}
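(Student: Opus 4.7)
The plan is to compute the Fourier transform $\widehat{S_0^{n-1}}(\xi)$ explicitly using Gauss sums and then assemble the $L^p$ average by splitting $\mathbb{F}_q^n\setminus\{0\}$ into the two natural regimes $\{|\xi|^2 \neq 0\}$ and $\{|\xi|^2 = 0\}$. Starting from the orthogonality identity $\mathbf{1}_{|x|^2 = 0} = q^{-1}\sum_{t \in \mathbb{F}_q}\chi(t|x|^2)$, the $t = 0$ term vanishes for $\xi \neq 0$; for $t \neq 0$ I would complete the square in each coordinate to factor the remaining $x$-sum into $n$ copies of the quadratic Gauss sum $G(t) = \sum_y \chi(ty^2) = \eta(t)G(1)$, with $|G(1)| = q^{1/2}$, obtaining the explicit formula
\[
\widehat{S_0^{n-1}}(\xi) \;=\; q^{-1}G(1)^n \sum_{t \neq 0} \eta(t)^n\,\chi\!\left(-\tfrac{|\xi|^2}{4t}\right).
\]

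Next I would evaluate the residual $t$-sum in the two regimes. In the `light cone' case $|\xi|^2 = 0$, $\xi \neq 0$, the phase collapses to $1$ and the sum reduces to $\sum_{t \neq 0}\eta(t)^n$, leading to the large bound $|\widehat{S_0^{n-1}}(\xi)| \approx q^{n/2}$. When $|\xi|^2 \neq 0$, the substitution $u = -|\xi|^2/(4t)$ turns the sum into either a geometric character sum or an $\eta$-twisted Gauss sum, producing the much smaller bound $|\widehat{S_0^{n-1}}(\xi)| \lesssim q^{(n-2)/2}$. Conceptually this is the heart of the matter: the frequencies lying on $S_0^{n-1}$ itself are the ones that obstruct Salem behavior, and their contribution is precisely what the $L^p$ averages framework is designed to quantify.

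Combining the pointwise estimates with the counts $\approx q^n$ (for $|\xi|^2 \neq 0$) and $|S_0^{n-1}| - 1 \approx q^{n-1}$ (for $|\xi|^2 = 0$ and $\xi \neq 0$, using the size formula of the preceding lemma on the frequency side), I would compute
\[
\lVert\widehat{S_0^{n-1}}\rVert_p^p \;\approx\; q^{-n}\!\left(q^n \cdot q^{p(n-2)/2} \;+\; q^{n-1}\cdot q^{pn/2}\right) \;\approx\; q^{pn/2 - 1},
\]
the light-cone contribution dominating, so $\lVert\widehat{S_0^{n-1}}\rVert_p \approx q^{n/2 - 1/p}$. Since $|S_0^{n-1}|^{1-s} \approx q^{(n-1)(1-s)}$, inequality \eqref{(p,s) Salem set} holds precisely when $n/2 - 1/p \leq (n-1)(1-s)$, which rearranges to $s \leq \frac{n-2}{2(n-1)} + \frac{1}{p(n-1)}$. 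Both directions of the `if and only if' follow because the two-sided bounds on $\lVert\widehat{S_0^{n-1}}\rVert_p$ are sharp.

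The main obstacle will be the delicate Gauss-sum bookkeeping, in particular tracking the parity dependence of $\eta(t)^n$ through the residual $t$-sum and verifying that the substitution in the $|\xi|^2 \neq 0$ regime absorbs any $\eta$-twists cleanly into the modulus. The hypothesis on $n$ is also essential: without the proviso `$n=2$ with $-1$ a square', the set $S_0^1$ degenerates to $\{0\}$ and the estimates above have nothing to measure.
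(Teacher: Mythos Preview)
Your approach via orthogonality and completing the square to reduce to Gauss sums is exactly the standard one, and it is what underlies the references the paper cites (Theorem 3.1 in \cite{fraserfinite}, resting on the explicit Fourier formula recorded in \cite{covert}). For $n$ even your two-regime estimates are correct: on the light cone $\sum_{t\neq 0}\eta(t)^n=q-1$ gives $|\widehat{S_0^{n-1}}(\xi)|\approx q^{n/2}$, while off the cone $\sum_{t\neq0}\chi(-|\xi|^2/(4t))=-1$ gives $q^{(n-2)/2}$, and these assemble to $\lVert\widehat{S_0^{n-1}}\rVert_p\approx q^{n/2-1/p}$ as you claim.

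The genuine gap is the odd-$n$ case, which you flag as an ``obstacle'' but do not actually carry through. When $n$ is odd one has $\eta(t)^n=\eta(t)$, so on the light cone the residual sum is $\sum_{t\neq0}\eta(t)=0$: the large light-cone spike disappears entirely, contrary to your claim $|\widehat{S_0^{n-1}}(\xi)|\approx q^{n/2}$. Off the cone the $\eta$-twisted sum becomes a genuine Gauss sum of modulus $q^{1/2}$, yielding $|\widehat{S_0^{n-1}}(\xi)|=q^{-1}\cdot q^{n/2}\cdot q^{1/2}=q^{(n-1)/2}$ rather than $q^{(n-2)/2}$. Consequently, for $n$ odd, $\lVert\widehat{S_0^{n-1}}\rVert_p\approx q^{(n-1)/2}$ independently of $p$, and $S_0^{n-1}$ is in fact $(\infty,\tfrac12)$-Salem. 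Your argument therefore only proves the stated threshold for even $n$; once the parity casework is done honestly, it shows that for odd $n\geq3$ the correct ``if and only if'' threshold is $s\leq\tfrac12$, strictly larger than $\tfrac{n-2}{2(n-1)}+\tfrac{1}{p(n-1)}$ for every $p>2$. You should make this explicit rather than leave it as unresolved bookkeeping.
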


\begin{proof}
See Theorem 3.1 in \cite{fraserfinite}, which relies on Proposition 3.1.6 from \cite{covert}. See also Lemma 2.2 in \cite{iosevich} for the case $p = \infty$.
\end{proof}

\begin{figure}[htp]
    \centering
    \includegraphics[width=10cm]{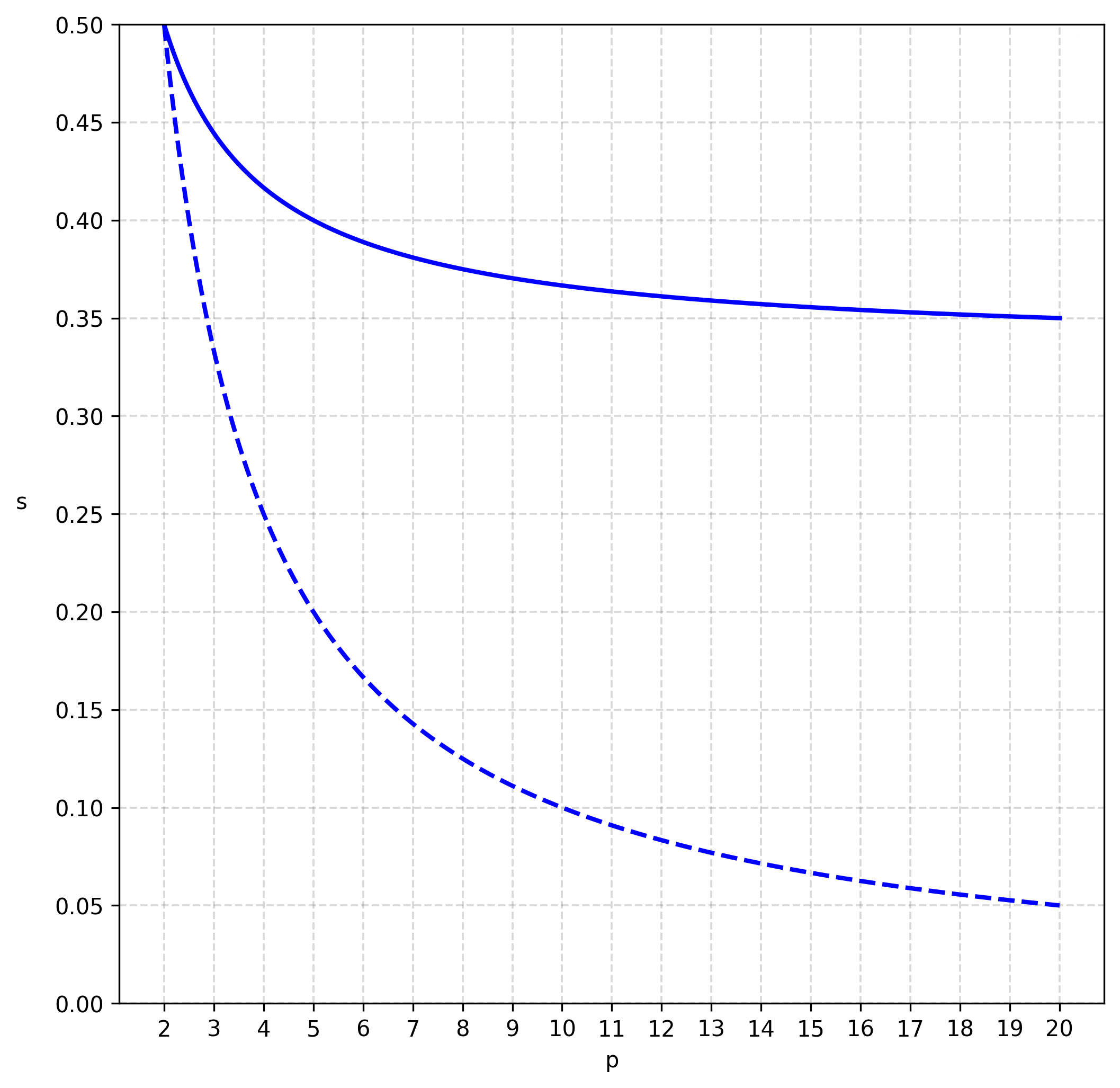}
    \caption{The threshold for which $S_0^3$ forms a $(p,s)$-Salem set is $s = \frac{1}{3} + \frac{1}{3p}$ (see Theorem \ref{sphere_zero_salem} with $n=4$), and this is plotted as a solid line. It is asymptotic to $\frac{1}{3}$ as $p \to \infty$. The trivial lower bound $(p, \frac{1}{p})$ from Corollary \ref{p,1/p corolalry} is plotted as a dashed line for comparison. One can quickly see that the sphere of radius zero exhibits good Fourier behavior on average, despite not being Salem.}
    \label{fig:circle_salem_zero}
\end{figure}

Next, consider the following sets:
\[
C^n \coloneqq \{ (z_1, \dots, z_n) \in \mathbb{F}_q^n : z_1^2+ \cdots + z_{n-2}^2 = z_{n-1}z_{n}, \, z_n \neq 0\}
\]
and
\[
D^n \coloneqq  \{ (z_1, \dots, z_n) \in \mathbb{F}_q^n : z_1^2+ \cdots + z_{n-1}^2 = z_{n}^2, \, z_n \neq 0\},
\]
both of which can be thought of as a `discrete cone'. These sets are also not Salem sets but exhibit nontrivial Fourier analytic behavior.  Perhaps curiously, the threshold is the same as for the sphere of radius zero.

\begin{prop} \label{conescor}
For $n \geq 3$, both $C^n$ and $D^n$ are $(p,s)$-Salem sets if and only if 
\[
s \leq \frac{ n-2}{2(n-1)} + \frac{1}{p(n-1)}. 
\]
In particular, neither is an $(\infty, \frac{1}{2})$-Salem set, but each is an $(\infty,s)$-Salem set if and only if $s \leq \frac{n-2}{2(n-1)}$.
\end{prop}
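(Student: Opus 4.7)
The plan is to reduce the Fourier analysis of $C^n$ and $D^n$ to that of the sphere of radius zero treated in Proposition \ref{sphere_zero_salem}. Each is, up to removal of a lower-dimensional affine slice, the zero set in $\mathbb{F}_q^n$ of a non-degenerate quadratic form of rank $n$. For $D^n$, the defining form $z_1^2+\cdots+z_{n-1}^2-z_n^2$ is already diagonal. For $C^n$, the substitution $u = z_{n-1}+z_n$, $v = z_{n-1}-z_n$ (valid for $q$ odd) converts the cross term $z_{n-1}z_n$ into $(u^2-v^2)/4$, rewriting $C^n$ as the zero set of the diagonal non-degenerate form $z_1^2+\cdots+z_{n-2}^2-\tfrac{1}{4} u^2+\tfrac{1}{4} v^2$ with the slice $\{u=v\}$ removed. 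A short parametrization argument gives $|C^n| \approx |D^n| \approx q^{n-1}$.

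The Fourier transform of the zero set of a non-degenerate rank-$n$ quadratic form on $\mathbb{F}_q^n$ can be computed via standard Gauss sum identities, exactly as in the proof of Proposition \ref{sphere_zero_salem}; see \cite{covert, iosevich}. Since, for $q$ odd, non-degenerate quadratic forms of a given rank produce Gauss sums of the same modulus, the resulting pointwise bounds on $|\widehat{E}(\xi)|$, and hence the $L^p$ averages $\lVert\widehat{E}\rVert_p$, agree (up to constants) with those for $S_0^{n-1}$. This transfers Proposition \ref{sphere_zero_salem} directly and yields both the upper bound on $s$ (the ``if'' direction) and the matching lower bound coming from the exceptional frequencies that witness sharpness for $S_0^{n-1}$ (the ``only if'' direction).

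For sufficiency one may alternatively combine the $(\infty, \tfrac{n-2}{2(n-1)})$-Salem bound coming from the $L^\infty$ Gauss sum estimate with the trivial $(2, \tfrac{1}{2})$-Salem bound from Plancherel's theorem \eqref{Plancherel's theorem for E}, and interpolate via Proposition \ref{concavity} to recover the claimed threshold across all $p \in [2,\infty]$. The main obstacle is controlling the affine slice removed from each full cone: it has cardinality $O(q^{n-2})$, and a direct cardinality bound on its Fourier transform shows that for $n \geq 3$ it is absorbed into the main estimates; handling $C^n$ and $D^n$ requires parallel but slightly different bookkeeping because their excised slices differ in form.
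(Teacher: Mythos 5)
Your reduction to quadratic-form Gauss sums is the right opening move, but the treatment of the removed slice is the crux of the proof, and your dismissal of it as ``absorbed into the main estimates'' is where the argument breaks down. Write $Q_D(z) = z_1^2 + \cdots + z_{n-1}^2 - z_n^2$ and $Z(Q_D)$ for its full zero set, so that $D^n = Z(Q_D) \setminus \Sigma$ with $\Sigma \coloneqq Z(Q_D) \cap \{z_n = 0\}$, $|\Sigma| \approx q^{n-2}$. At the $q-1$ frequencies $\xi = (0,\dots,0,\xi_n)$ with $\xi_n \neq 0$, the two pieces behave very differently: $\widehat{\Sigma}(\xi) = |\Sigma| \approx q^{n-2}$ with \emph{no} cancellation (since $\xi$ is constant on the hyperplane $z_n = 0$), whereas $\widehat{Z(Q_D)}(\xi)$ is small, because summing $\chi(-\xi_n z_n)$ over all $z_n$ produces cancellation. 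Concretely, writing $a = |S_{t^2}^{n-2}|$ (constant for $t\neq 0$, $\approx q^{n-2}$) and $b = |S_0^{n-2}|$, one gets $\widehat{Z(Q_D)}((0,\xi_n)) = b - a$, which is $O(q^{(n-1)/2})$, but $\widehat{D^n}((0,\xi_n)) = (b-a) - b = -a \approx -q^{n-2}$. An identical computation gives $\widehat{C^n}((0,\xi_n)) = -q^{n-2}$ exactly. So the excised slice does not wash out: it is precisely what creates Fourier coefficients of size $q^{n-2}$ in $\widehat{D^n}$ and $\widehat{C^n}$ that are absent from $\widehat{S_0^{n-1}}$.

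This matters quantitatively. The target bound is $|D^n|^{1-s}\approx q^{n/2 - 1/p}$ at the claimed threshold; the $q-1$ slice frequencies alone contribute $\approx q^{(n-2) + (1-n)/p}$ to $\|\widehat{D^n}\|_p$. For $n = 3$ this is comfortably below the target and for $n=4$, $p=\infty$ it sits exactly at the threshold $q^{n/2} = q^2$ — which incidentally means the sharp example for the ``only if'' direction is the slice, not the exceptional frequencies inherited from $S_0^{n-1}$ as you suggest. A direct cardinality bound on $\widehat{\Sigma}$ yields only $q^{n-2}$, which is $\geq q^{n/2}$ for $n \geq 4$, so it cannot be used to absorb the slice. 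Your interpolation alternative has the same problem: it requires first establishing the sharp $(\infty, \tfrac{n-2}{2(n-1)})$ bound, which is exactly where the slice bites. What is needed is a direct computation of $\widehat{C^n}(\xi)$ and $\widehat{D^n}(\xi)$, splitting frequencies into $\xi' = 0$ (where the slice term dominates) and $\xi' \neq 0$ (where one must exploit cancellation over the parameter $t$ in $\sum_{t\neq 0}\chi(-\xi_n t)\widehat{S_{t^2}^{n-2}}(\xi')$ via Kloosterman-type sums, not merely the pointwise sphere bound, since the naive triangle inequality only gives $q^{(n+1)/2}$), and then assembling the $L^p$ average from both regimes. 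That bookkeeping is the actual content of the proposition, and the proposal skips it.
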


\begin{proof}
    See Corollary 3.11 in \cite{fraserfinite}. 
\end{proof}

Our next example is highly non-Euclidean and is more closely related to the geometry of finite fields. While subspaces themselves exhibit trivial Fourier behavior, their complements display much more interesting behavior.

\begin{prop} \label{smallcomplement}
Let $1\leq k <n$, and define $E_k\coloneqq \mathbb{F}_{q}^k \times\{0\} \subseteq \mathbb{F}_{q}^n$, and $E \coloneqq \mathbb{F}_{q}^n \setminus E_k$.  Then $E$ is a $(p,s)$-Salem set if and only if 
\[
s \leq 1 -\frac{k}{n}+ \frac{k}{pn}.
\]
In particular,  $E$ is an $(\infty, \frac{1}{2})$-Salem set if and only if $k \leq \frac{n}{2}$.
\end{prop}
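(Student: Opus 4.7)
The plan is to exploit the fact that $E$ is the complement of a subspace, so its indicator function can be written as $E(x) = 1 - E_k(x)$, and this should make $\widehat{E}$ concentrated on the annihilator of $E_k$. First I would pass to the Fourier side: since the Fourier transform of the constant function $1$ on $\mathbb{F}_q^n$ is $q^n \cdot \mathbb{1}_{\{0\}}$, linearity gives $\widehat{E}(\xi) = -\widehat{E_k}(\xi)$ for every $\xi \neq 0$. Next I would compute $\widehat{E_k}$ directly from the product structure $E_k = \mathbb{F}_q^k \times \{0\}$: writing $\xi = (\xi',\xi'') \in \mathbb{F}_q^k \times \mathbb{F}_q^{n-k}$ and using that $\sum_{x' \in \mathbb{F}_q^k} \chi(-\xi' \cdot x') = q^k \cdot \mathbb{1}_{\{\xi' = 0\}}$, one obtains
\[
\widehat{E_k}(\xi) \;=\; q^k \cdot \mathbb{1}_{\{\xi' = 0\}}.
\]
Therefore, for $\xi \neq 0$, $|\widehat{E}(\xi)| = q^k$ when $\xi \in \{0\} \times (\mathbb{F}_q^{n-k} \setminus \{0\})$, and $|\widehat{E}(\xi)| = 0$ otherwise.

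From this explicit description I can immediately read off all $L^p$ averages. The number of frequencies contributing is $q^{n-k} - 1$, so for $p \in [1,\infty)$,
\[
\lVert \widehat{E} \rVert_p^p \;=\; q^{-n}(q^{n-k} - 1)\, q^{kp} \;\approx\; q^{kp - k},
\]
giving $\lVert \widehat{E} \rVert_p \approx q^{k - k/p}$, and for $p = \infty$, $\lVert \widehat{E} \rVert_\infty = q^k$. On the other hand $|E| = q^n - q^k \approx q^n$, so $|E|^{1-s} \approx q^{n(1-s)}$. The $(p,s)$-Salem condition $\lVert \widehat{E} \rVert_p \lesssim |E|^{1-s}$ is therefore equivalent to
\[
k\Bigl(1 - \tfrac{1}{p}\Bigr) \;\leq\; n(1-s),
\]
which rearranges to $s \leq 1 - \frac{k}{n} + \frac{k}{pn}$, as claimed. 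Note that because the estimates on both sides match up to constants, this is a genuine if-and-only-if statement, not merely a sufficient condition.

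The final assertion about the uniform case is just specialization: setting $p = \infty$ in the threshold gives $s \leq 1 - \frac{k}{n}$, and $1 - \frac{k}{n} \geq \frac{1}{2}$ precisely when $k \leq \frac{n}{2}$. There is no real obstacle in this argument: once one writes $E = \mathbb{F}_q^n \setminus E_k$ and observes that $\widehat{1}$ is supported at the origin, everything reduces to a one-line computation of $\widehat{E_k}$ on the annihilator of the subspace and a count of its size. The only mild care needed is in the $p = \infty$ case, where one should verify that the supremum is actually attained (it is, at any $\xi = (0, \xi'')$ with $\xi'' \neq 0$), so the implicit constants do not swallow a $q$-dependent factor.
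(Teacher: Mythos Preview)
Your argument is correct and is exactly the natural direct computation: once you observe that $\widehat{E}(\xi)=-\widehat{E_k}(\xi)$ for $\xi\neq 0$ and that $\widehat{E_k}$ is $q^k$ times the indicator of the annihilator $\{0\}\times\mathbb{F}_q^{n-k}$, the $L^p$ norms and the threshold in $s$ fall out immediately. The paper does not reproduce a proof here but simply cites \cite[Proposition~3.8]{fraserfinite}, where the same explicit computation is carried out, so your approach matches the intended one.
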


\begin{proof}
    See Proposition 3.8 in \cite{fraserfinite}. 
\end{proof}

The above demonstrates that a set can be a $(4, \frac{1}{2})$-Salem set without being an $(\infty, \frac{1}{2})$-Salem set. Indeed, this occurs whenever $\frac{n}{2} < k \leq \frac{2n}{3}$.

Next, we consider certain algebraic sets, i.e., sets defined by polynomials. First, we examine an example of a flat. It was observed in \cite[Example 4.2]{iosevich} that the set $\{(k, k) : k \in \mathbb{F}_q\} \subseteq \mathbb{F}_q^2$ is not a Salem set. The following more general result can be found in \cite[Corollary 3.13]{fraserfinite}, where it is deduced as a special case of \cite[Proposition 3.12]{fraserfinite}. Here, we give a simple direct proof. Note that this result shows that flats are `as bad as possible' from a Fourier-analytic point of view, at least in the context of the $L^p$ averages framework.

\begin{prop}
 Let
\[
E \coloneqq \{ (k, \dots, k) : k \in \mathbb{F}_{q}\} \subseteq \mathbb{F}_{q}^n.
\]
Then $E$ is a $(p, s)$-Salem set if and only if $s \leq \frac{1}{p}$.
\end{prop}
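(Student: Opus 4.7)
The plan is to compute $\widehat{E}(\xi)$ explicitly using the standard orthogonality of characters, then directly evaluate the $L^p$ norm and compare with $|E|^{1-s}$.

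First I would observe that, writing $t = \xi_1 + \cdots + \xi_n \in \mathbb{F}_q$, the Fourier transform factors as
\begin{equation*}
\widehat{E}(\xi) = \sum_{k \in \mathbb{F}_q} \chi(-k(\xi_1 + \cdots + \xi_n)) = \sum_{k \in \mathbb{F}_q} \chi(-kt).
\end{equation*}
The inner sum is $q$ if $t = 0$ and vanishes otherwise, by the standard character sum identity for the nontrivial character $\chi$ on the additive group of $\mathbb{F}_q$. Thus $\widehat{E}(\xi) = q$ precisely when $\xi$ lies in the hyperplane $H = \{\xi \in \mathbb{F}_q^n : \xi_1 + \cdots + \xi_n = 0\}$, and $\widehat{E}(\xi) = 0$ otherwise. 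In particular $|H| = q^{n-1}$, so there are exactly $q^{n-1} - 1$ nonzero frequencies $\xi$ for which $|\widehat{E}(\xi)| = q$.

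Next, for $p \in [1,\infty)$, I would simply substitute into the definition of the $p$-norm:
\begin{equation*}
\lVert \widehat{E}\rVert_p^p = q^{-n} \sum_{\xi \neq 0} |\widehat{E}(\xi)|^p = q^{-n}(q^{n-1}-1) q^p \approx q^{p-1},
\end{equation*}
so $\lVert \widehat{E}\rVert_p \approx q^{1 - 1/p}$. For $p = \infty$ the calculation is immediate: $\lVert \widehat{E}\rVert_\infty = q$. Since $|E| = q$, the condition \eqref{(p,s) Salem set} becomes $q^{1-1/p} \lesssim q^{1-s}$ (with the convention $1/\infty = 0$), which holds for all prime powers $q$ if and only if $s \leq 1/p$. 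This yields both directions of the equivalence.

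There is no real obstacle here: the character sum identity gives an extremely clean formula for $\widehat{E}$, so the entire statement reduces to a one-line counting argument for the hyperplane $H$. The only thing to be careful about is keeping track of which direction of the inequality is the `if' and which is the `only if', since the implicit constants in \eqref{(p,s) Salem set} are allowed to depend on $p$ and $s$ but not on $q$; this is what forces the sharp threshold $s = 1/p$ rather than any strict inequality. The proposition is therefore best viewed as a sanity check confirming that a `flat' (a one-dimensional affine-like set spanned by a single direction) saturates the trivial bound from Corollary \ref{p,1/p corolalry} and admits no improvement.
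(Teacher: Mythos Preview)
Your proof is correct and follows essentially the same approach as the paper's own proof: both compute $\widehat{E}(\xi)$ via the character sum, observe it equals $q$ on the hyperplane $\{\xi_1+\cdots+\xi_n=0\}$ and $0$ elsewhere, and then evaluate $\lVert \widehat{E}\rVert_p \approx q^{1-1/p}$ directly. Your version is slightly more explicit in handling the $p=\infty$ case and in articulating why the ``only if'' direction holds, but the argument is the same.
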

\begin{proof}
By direct calculation, for $p \geq 1$, we have:
\begin{align*}
    \lVert \widehat{E}\rVert_p &= \Bigg(q^{-n} \sum_{\xi \neq 0} |\widehat{E}(\xi)|^p \Bigg)^{\frac{1}{p}} \\
     &= \Bigg(q^{-n} \sum_{\xi = (\xi_1, \dots, \xi_n) \neq 0} \Bigg\lvert  \sum_{k \in \mathbb{F}_q} \chi\big(-k(\xi_1 + \cdots + \xi_n)\big)\Bigg\rvert^p \Bigg)^{\frac{1}{p}} \\
      &= \Bigg(q^{-n} \sum_{\substack{\xi \neq 0:\\
      \xi_1 + \cdots + \xi_n =0}} q^p \Bigg)^{\frac{1}{p}} \\
       &\approx \Bigg(q^{-n} q^{n-1}  q^p \Bigg)^{\frac{1}{p}} \\
       &= q^{1-\frac{1}{p}},
\end{align*}
completing the proof. Here, we used the simple but fundamental  facts that
\[
\sum_{x \in \mathbb{F}_q} \chi(x) = 0, \qquad \sum_{x \in \mathbb{F}_q} \chi(0) = q,
\]
which are often central to this type of calculation.
\end{proof}

\begin{figure}[htp]
    \centering
    \includegraphics[width=10cm]{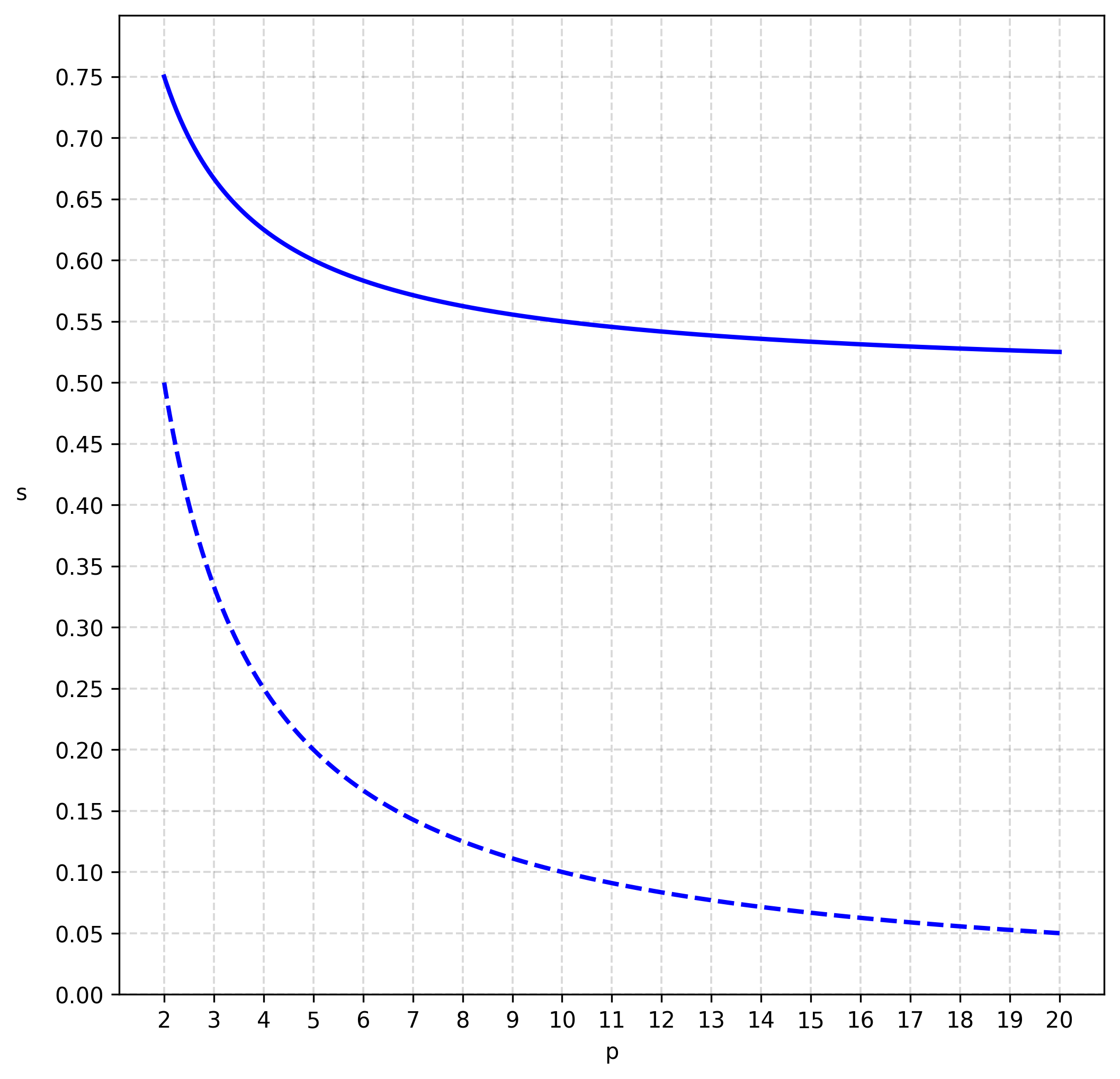}
    \caption{The threshold at which $\mathbb{F}_q^2 \setminus (\mathbb{F}_q \times \{0\})$ forms a $(p,s)$-Salem set is $s = \frac{1}{2} + \frac{1}{2p}$ (see Theorem \ref{smallcomplement} with $n=2$ and $k=1$), and this is plotted as a solid line. It is asymptotic to $\frac{1}{2}$ as $p \to \infty$. The trivial lower bound $(p, \frac{1}{p})$ from Corollary \ref{p,1/p corolalry} is plotted as a dashed line for comparison.}
    \label{fig:subspace}
\end{figure}

To obtain nontrivial Fourier behavior, we need to add some `curvature'.

\begin{prop}
For $n \geq 2$, let
\[
E \coloneqq \{ (k, \dots, k, k^{-1}) : k \in \mathbb{F}_{q}^{\times}\} \subseteq \mathbb{F}_{q}^n.
\]
If $n=2$, then $E$ is an $(\infty, \frac{1}{2})$-Salem set. On the other hand, if $n \geq 3$,  $E$ is a $(p,\frac{2}{p})$-Salem set for all $p\geq 4$.
\end{prop}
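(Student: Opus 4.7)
The plan is to unwind the definition of the Fourier transform and reduce the problem to estimating a Kloosterman-type character sum. For $\xi = (\xi_1,\ldots,\xi_n) \in \mathbb{F}_q^n$, setting $a \coloneqq \xi_1 + \cdots + \xi_{n-1}$ and $b \coloneqq \xi_n$, the definition of $E$ gives
\[
\widehat{E}(\xi) = \sum_{k \in \mathbb{F}_q^\times} \chi\bigl(-ak - bk^{-1}\bigr),
\]
which is (up to sign) a classical Kloosterman sum. The key input will be Weil's bound: whenever $ab \neq 0$ one has $|\widehat{E}(\xi)| \leq 2\sqrt{q}$. The degenerate cases are also easy, since $\sum_{x \in \mathbb{F}_q^\times} \chi(cx) = -1$ for $c \neq 0$ and equals $q-1$ for $c = 0$: if exactly one of $a,b$ vanishes then $|\widehat{E}(\xi)| = 1$, and if $a = b = 0$ then $|\widehat{E}(\xi)| = q-1$.

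For $n = 2$ the map $\xi \mapsto (a,b) = (\xi_1, \xi_2)$ is the identity, so $\xi \neq 0$ is equivalent to $(a,b) \neq (0,0)$ and the degenerate case $a = b = 0$ is excluded. Combining Weil's bound with the degenerate computations yields $\|\widehat{E}\|_\infty \leq 2\sqrt{q} \lesssim |E|^{1/2}$, since $|E| = q-1$. This gives the $(\infty, \tfrac{1}{2})$-Salem conclusion.

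For $n \geq 3$ the situation differs because $a = b = 0$ can hold with $\xi \neq 0$: this forces $\xi_n = 0$ and $\xi_1 + \cdots + \xi_{n-1} = 0$, giving exactly $q^{n-2} - 1$ such frequencies, each with $|\widehat{E}(\xi)| = q-1$. The natural plan is to split the sum $\sum_{\xi \neq 0} |\widehat{E}(\xi)|^p$ according to the four cases: (A) $a = b = 0$; (B) $a = 0$, $b \neq 0$; (C) $a \neq 0$, $b = 0$; (D) $ab \neq 0$. Direct counting shows cases (B) and (C) each contribute $\approx q^{n-1}$ frequencies of size $1$, case (A) contributes $\approx q^{n-2}$ frequencies of size $\approx q$, and case (D) contributes $\approx q^n$ frequencies of size $\lesssim \sqrt{q}$ by Weil. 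The three nontrivial contributions to $\sum_{\xi \neq 0}|\widehat{E}(\xi)|^p$ are therefore of orders
\[
q^{n-2+p}, \qquad q^{n-1}, \qquad q^{n+p/2},
\]
and the plan is to observe that for $p \geq 4$ the first dominates (since $n-2+p \geq n+p/2$ precisely when $p \geq 4$). This yields $\|\widehat{E}\|_p \lesssim (q^{-n} \cdot q^{n-2+p})^{1/p} = q^{1-2/p} \approx |E|^{1-2/p}$, as required.

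The main obstacle is correctly invoking Weil's bound for the Kloosterman sum in case (D); once that is in hand the rest of the argument is bookkeeping. A secondary subtlety to be aware of is that at the endpoint $p = 4$ the contributions from cases (A) and (D) are of the same order, so the analysis must genuinely sum them rather than dropping one, though the final bound is unaffected.
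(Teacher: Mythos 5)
Your proposal is correct and follows exactly the approach the paper alludes to: reducing $\widehat{E}(\xi)$ to the Kloosterman sum $\sum_{k\in\mathbb{F}_q^\times}\chi(-ak-bk^{-1})$ with $a = \xi_1+\cdots+\xi_{n-1}$, $b=\xi_n$, invoking Weil's bound $|\widehat{E}(\xi)|\leq 2\sqrt q$ when $ab\neq 0$, handling the degenerate cases directly, and then counting frequencies. The case split (A)--(D) with contributions $q^{n-2+p}$, $q^{n-1}$, $q^{n+p/2}$ and the observation that $q^{n-2+p}$ dominates precisely when $p\geq 4$ is exactly what drives the $(p,\tfrac{2}{p})$ threshold, matching the proof of Proposition 3.14 in the cited reference.
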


\begin{proof}
    See Proposition 3.14 in \cite{fraserfinite}. Unsurprisingly, this result is proved by appealing to Kloosterman sums. 
\end{proof}

By replacing the Kloosterman sums in the previous result with more general character sums, one obtains a new general class of Salem sets.

\begin{thm} \label{curves}
For $n \geq 2$, let
\[
E \coloneqq \{ (f_1(k), \dots, f_n(k)) : k \in \mathbb{F}_{q}\} \subseteq \mathbb{F}_{q}^n,
\]
where $f_1, \dots, f_n \in \mathbb{F}_{q}[x]$.  Suppose $f_1, \dots, f_n$  span an $m$-dimensional subspace of $\mathbb{F}_{q}[x]$.  If $n>m$,  then $E$ is $(p,\frac{m}{p})$-Salem set for all $p \geq 2m$.  On the other hand, if  $n=m$, that is, if $f_1, \dots, f_n$ are linearly independent polynomials, then $E$ is an $(\infty,\frac{1}{2})$-Salem set.
\end{thm}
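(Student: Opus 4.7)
The plan is to evaluate $\widehat{E}(\xi)$ directly as a one-variable character sum over $\mathbb{F}_q$ and then control it via the Weil bound on polynomial exponential sums. For each $\xi = (\xi_1,\dots,\xi_n) \in \mathbb{F}_q^n$, every $x \in E$ has the form $(f_1(k),\dots,f_n(k))$ for some $k \in \mathbb{F}_q$, so up to bounded multiplicity from the parametrisation $\Phi\colon k \mapsto (f_1(k),\dots,f_n(k))$ we have
\begin{equation*}
\widehat{E}(\xi) \;\approx\; \sum_{k \in \mathbb{F}_q} \chi\bigl(-g_\xi(k)\bigr), \qquad g_\xi(x) \coloneqq \sum_{i=1}^n \xi_i f_i(x) \in \mathbb{F}_q[x].
\end{equation*}
The key object is then the linear map $T\colon \mathbb{F}_q^n \to \mathbb{F}_q[x]$, $T(\xi) = g_\xi$, whose image is the $m$-dimensional subspace spanned by the $f_i$, so $\dim \ker(T) = n - m$.

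Next I would split the sum over $\xi \neq 0$ according to $T$. For $\xi \in \ker(T)\setminus\{0\}$, the polynomial $g_\xi$ is identically zero, which forces $\xi \cdot \Phi(k) = 0$ for every $k$ and hence $\widehat{E}(\xi) = |E| \approx q$. For $\xi \notin \ker(T)$, $g_\xi$ is a nonzero polynomial of degree at most $D \coloneqq \max_i \deg f_i$, and Weil's theorem on character sums of polynomials yields
\begin{equation*}
\bigl|\widehat{E}(\xi)\bigr| \;\lesssim_D\; \sqrt{q} \;\approx\; |E|^{1/2}.
\end{equation*}

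When $n = m$, the kernel is trivial, every nonzero $\xi$ falls into the Weil regime, and $\lVert \widehat{E}\rVert_\infty \lesssim |E|^{1/2}$, which is exactly the $(\infty,\tfrac{1}{2})$-Salem conclusion. When $n > m$, combining the two regimes yields
\begin{equation*}
\sum_{\xi \neq 0} \bigl|\widehat{E}(\xi)\bigr|^p \;\lesssim\; q^{n-m}\cdot q^p \;+\; q^n\cdot q^{p/2} \;\lesssim\; q^{n-m+p} + q^{n + p/2}.
\end{equation*}
The hypothesis $p \geq 2m$ is precisely what forces the first (kernel) term to dominate; dividing by $q^n$ and taking $p$-th roots then produces $\lVert \widehat{E}\rVert_p \lesssim q^{1 - m/p} \approx |E|^{1 - m/p}$, which is the $(p,m/p)$-Salem property.

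The main obstacle is a rigorous application of the Weil bound: the $\sqrt{q}$ estimate can fail for polynomials of the form $h(x)^p - h(x) + c$ in characteristic $p$, so one must verify that such degenerate $g_\xi$ do not arise for $\xi \notin \ker(T)$, or invoke a refinement of Weil's theorem that handles additive polynomials. A secondary and more routine issue is bounding the discrepancy between $\widehat{E}(\xi)$ and the raw character sum $\sum_k \chi(-g_\xi(k))$ when $\Phi$ fails to be injective; this is harmless provided all fibres of $\Phi$ have $q$-independent size, which follows from the degrees of the $f_i$ being fixed.
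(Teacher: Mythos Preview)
Your approach---reducing $\widehat{E}(\xi)$ to $\sum_k\chi(-g_\xi(k))$, splitting on $\ker T$, and applying Weil off the kernel---is exactly the argument behind Proposition~3.15 of \cite{fraserfinite}, which the survey merely cites.

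Both obstacles you flag are genuine, and the first in fact shows the statement needs an extra hypothesis not recorded here. Weil gives nothing when $g_\xi$ is a nonzero constant, and the stated assumptions do not exclude this: with $n=2$, $f_1(x)=x^2$, $f_2(x)=x^2+1$ one has $m=2$, yet $g_{(-1,1)}\equiv 1$, so $E$ lies in the hyperplane $x_2-x_1=1$ and $|\widehat E(-1,1)|=|E|$, contradicting the $(\infty,\tfrac12)$-Salem conclusion. Your Artin--Schreier worry ($g_\xi=h^p-h+c$) is the same phenomenon in higher degree. What one actually needs is that the $\deg f_i$ be bounded independently of $q$ and that no nontrivial $\mathbb F_q$-linear combination of the $f_i$ be constant (respectively, Artin--Schreier); under such a hypothesis your argument goes through. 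The injectivity issue is also sharper than ``bounded fibres'': $\sum_k\chi(-g_\xi(k))$ is the Fourier transform of the pushforward weight $\nu(y)=|\Phi^{-1}(y)|$, not of the indicator of $E$, and these can differ by as much as $q-|E|$, which need not be $O(\sqrt q)$. The clean fix is to assume some $f_i$ is linear (forcing $\Phi$ injective and $|E|=q$), or to state the Salem estimate for $\nu$ rather than for $E$.
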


\begin{proof}
    See Proposition 3.15 in \cite{fraserfinite}.
\end{proof}

An especially simple example covered by Theorem \ref{curves} is the \emph{Veronese curve}.
\begin{cor}
The rational normal curve (or Veronese curve)
\[
\{(k, k^2, \dots , k^n) : k \in \mathbb{F}_q\} \subseteq \mathbb{F}_q^n
\]
is an $(\infty, \frac{1}{2})$-Salem set in $ \mathbb{F}_q^n$.
\end{cor}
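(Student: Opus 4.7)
The plan is to deduce this corollary as an immediate application of Theorem \ref{curves} with the explicit choice of polynomials $f_i(x) = x^i$ for $i = 1, \dots, n$. To invoke the strong $(\infty, \frac{1}{2})$-conclusion of the theorem rather than its weaker $L^p$ conclusion, I need to verify that this particular tuple falls into the case $n = m$, that is, that the polynomials $x, x^2, \dots, x^n$ are linearly independent in $\mathbb{F}_q[x]$.

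First I would note that distinct monomials are always linearly independent in the polynomial ring $\mathbb{F}_q[x]$ itself: if $\sum_{i=1}^n c_i x^i = 0$ as an element of $\mathbb{F}_q[x]$ with $c_i \in \mathbb{F}_q$, then comparing coefficients forces each $c_i = 0$. This is a statement about the polynomial ring and is independent of whether or not the associated polynomial \emph{functions} on $\mathbb{F}_q$ collide (which can happen, e.g.\ $x^q = x$ as functions). The hypothesis of Theorem \ref{curves} is about linear independence in $\mathbb{F}_q[x]$, so this is exactly what is required.

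Therefore $f_1, \dots, f_n$ span an $n$-dimensional subspace of $\mathbb{F}_q[x]$, so we are in the case $n = m$ of Theorem \ref{curves}, which yields directly that
\[
E = \{(k, k^2, \dots, k^n) : k \in \mathbb{F}_q\}
\]
is an $(\infty, \tfrac{1}{2})$-Salem set, completing the proof.

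There is no real obstacle here; the corollary is essentially a packaged special case of Theorem \ref{curves}, and the only mild subtlety worth flagging is the distinction between linear independence of polynomials (in $\mathbb{F}_q[x]$) and linear independence of the induced functions on $\mathbb{F}_q$, which is why the hypothesis of Theorem \ref{curves} is stated at the level of the polynomial ring.
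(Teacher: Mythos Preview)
Your proposal is correct and matches the paper's approach: the corollary is presented there as an immediate specialisation of Theorem~\ref{curves} with $f_i(x)=x^i$, and the only thing to observe is that these monomials are linearly independent in $\mathbb{F}_q[x]$, exactly as you note.
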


\begin{figure}[htp]
    \centering
    \includegraphics[width=10cm]{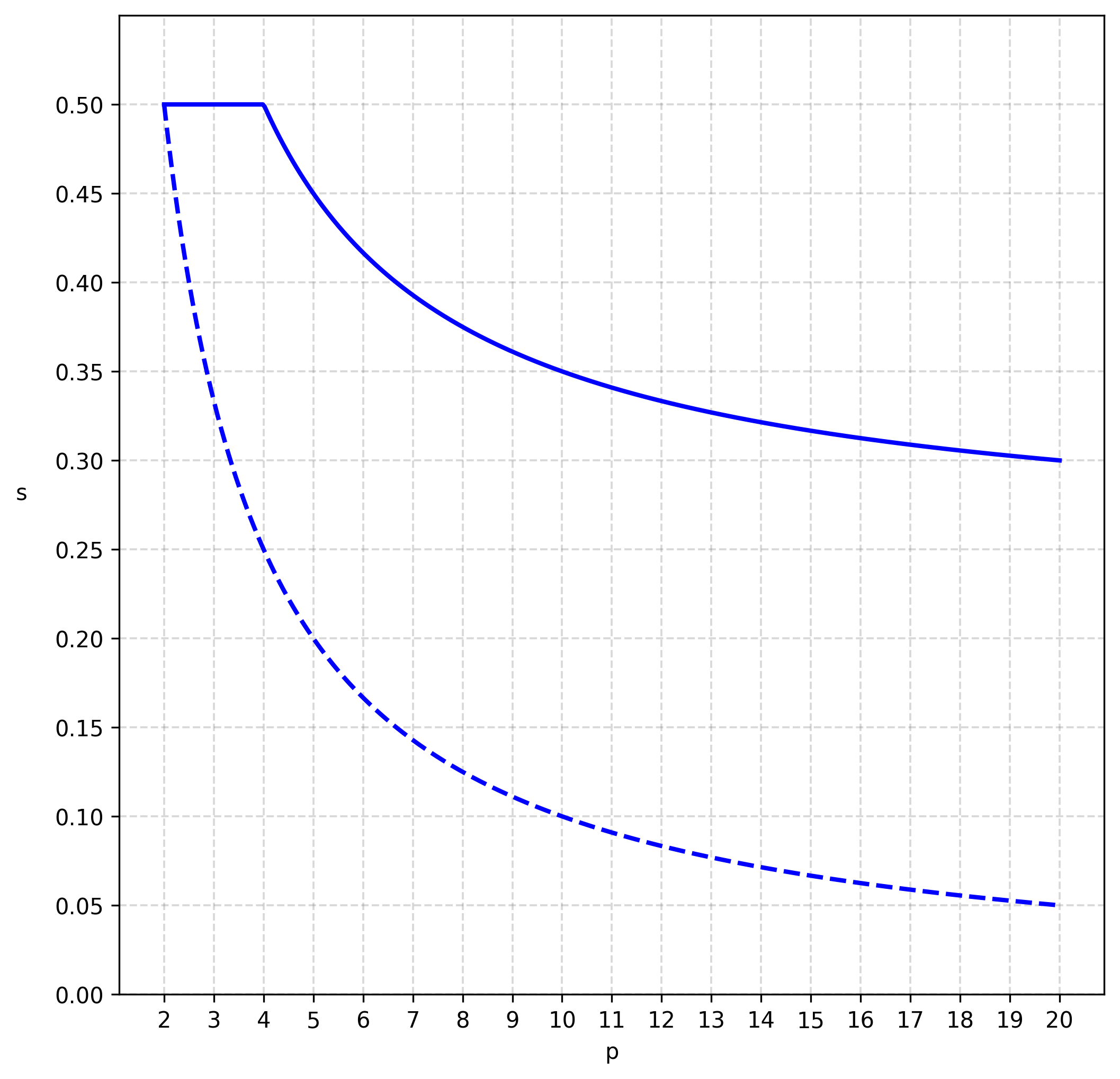}
    \caption{The threshold at which the Hamming variety in $\mathbb{F}_q^5$ forms a $(p,s)$-Salem set is $s = \min\{\frac{1}{2}, \frac{1}{4} + \frac{1}{p}\}$ (see Proposition \ref{hammingformula} with $n=5$), and this is plotted as a solid line. It is asymptotic to $\frac{1}{4}$ as $p \to \infty$. The trivial lower bound $(p, \frac{1}{p})$ from Corollary \ref{p,1/p corolalry} is plotted as a dashed line for comparison.}
    \label{fig:hamming}
\end{figure}

We provide one more example that will be needed later. For each $j\in \mathbb{F}_q^{\times}$, the Hamming variety $H_j$ in $\mathbb{F}_q^n$ is defined as
\begin{equation*}
    H_j \coloneqq\Big\{(x_1,\dots,x_n)\in \mathbb{F}_q^n: \prod_{k=1}^nx_k=j\Big\}.
\end{equation*}

Since $j\in \mathbb{F}_q^{\times}$, it is straightforward to verify that $|H_j|=(q-1)^{n-1}\approx q^{n-1}$.

\begin{prop} \label{hammingformula}
    The Hamming variety $H_j$ is a $(p,s)$-Salem set if and only if 
    \[
    s \leq \min\left\{\frac{1}{2}, \frac{1}{n-1}+\frac{1}{p} \right\}.
    \]
\end{prop}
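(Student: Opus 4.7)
The plan is to compute $\widehat{H_j}(\xi)$ exactly, stratifying by $m$, the number of zero coordinates of $\xi$, and then to insert into the $L^p$ average. The computation splits into two regimes of quite different flavour: an elementary regime $m \geq 1$, where the character sum factorizes into one-variable sums, and a deep regime $m = 0$, which is a multi-dimensional Kloosterman-type sum.

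In the elementary regime ($1 \leq m \leq n-1$), since $j \neq 0$, I will eliminate any one variable from the constraint $x_1 \cdots x_n = j$; choosing a zero-coordinate index to eliminate leaves a free sum over $(\mathbb{F}_q^{\times})^{n-1}$ of $\chi$ of a linear form in the remaining variables. This factorizes, and applying the identities $\sum_{x \in \mathbb{F}_q^{\times}}\chi(-cx) = -1$ (for $c \neq 0$) and $\sum_{x \in \mathbb{F}_q^{\times}}\chi(0) = q-1$ produces the exact value
\[
|\widehat{H_j}(\xi)| = (q-1)^{m-1} \approx q^{m-1}.
\]
In the deep regime $m=0$, the substitution $u_k = \xi_k x_k$ recasts $\widehat{H_j}(\xi)$ as the hyper-Kloosterman sum $\sum_{u_1 \cdots u_n = (\prod_k \xi_k)j} \chi(-u_1 - \cdots - u_n)$, for which Deligne's bound (Weil's, when $n=2$) gives $|\widehat{H_j}(\xi)| \lesssim q^{(n-1)/2}$. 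This is the main obstacle — a deep external input — but can be quoted as a black box.

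With these two estimates in hand, the rest is bookkeeping. Since the number of $\xi \in \mathbb{F}_q^n$ with exactly $m$ zero coordinates is $\approx q^{n-m}$,
\[
\|\widehat{H_j}\|_p^p \approx q^{-n}\Bigg(q^n \cdot q^{p(n-1)/2} + \sum_{m=1}^{n-1} q^{n-m+p(m-1)}\Bigg).
\]
The inner sum is increasing in $m$ for $p \geq 1$ and is maximized at $m = n-1$, contributing $\approx q^{p(n-2)-n+1}$. Taking $p$-th roots and comparing to $|H_j|^{1-s} \approx q^{(n-1)(1-s)}$ yields the sufficient condition $s \leq \min\{\tfrac{1}{2}, \tfrac{1}{n-1} + \tfrac{1}{p}\}$.

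For the necessity direction, both pieces of the minimum are sharp. The bound $s \leq \tfrac{1}{n-1}+\tfrac{1}{p}$ is sharp because the $m=n-1$ contribution was computed as an equality, not merely an estimate, so retaining just these $\approx q$ terms forces $\|\widehat{H_j}\|_p \gtrsim q^{n-2+(1-n)/p}$. The bound $s \leq \tfrac{1}{2}$ is sharp already at $p = 2$ via Plancherel, $\|\widehat{H_j}\|_2 \approx q^{(n-1)/2} \approx |H_j|^{1/2}$, and then propagates to all $p \geq 2$ by monotonicity of $L^p$ norms in $p$ under the near-probability measure $q^{-n}$ times counting (Jensen's inequality).
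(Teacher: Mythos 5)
Your proof is correct and follows essentially the route the paper points to (\cite{hamming} and \cite[Prop.~5.4]{fraser_rakhmonov_2}): stratify over the number $m$ of zero coordinates of $\xi$, evaluate the character sum exactly for $m\geq 1$, apply Deligne's hyper-Kloosterman bound when $m=0$, and feed the resulting pointwise estimates into the $L^p$ average. The only small quibble is that the $m=0$ term in your display for $\|\widehat{H_j}\|_p^p$ should carry $\lesssim$ rather than $\approx$ (Deligne is a one-sided bound), but your necessity argument correctly sidesteps this by invoking Plancherel for the $s\le\tfrac12$ threshold and the exact $m=n-1$ count for the other.
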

\begin{proof}
    See Proposition 5.4 in \cite{fraser_rakhmonov_2}, which also relies on estimates for the Fourier transform given in \cite{hamming}.
\end{proof}

\section{Applications}

\subsection{Sumsets}

Many problems in additive combinatorics and additive number theory revolve around the study of sumsets of specific sets $A,B$. For example, if  
\[
\mathbb{N}_0^2 \coloneqq \{0,1,4,9,16,\dots\}
\]  
is the set of square integers, then the famous theorem of Lagrange states that $\mathbb{N} = 4\mathbb{N}_0^2$, i.e., every natural number can be expressed as the sum of four squares. Some interesting estimates for, and problems concerning,  sumsets can be found in \cite{taovu}. 

Given non-empty sets $A,B\subseteq \mathbb{F}_q^n$, the \textit{sumset} is defined by  
\begin{equation*}
    A+B \coloneqq \{a+b: a\in A,\, b\in B\}\subseteq \mathbb{F}_q^n.
\end{equation*}

A key problem is to relate $|A+B|$ to $|A|$ and $|B|$. Clearly, one has the following trivial lower and upper bounds:
\begin{equation}
\label{trivial_lower_upper_bound_sumset}
\max\{|A|,|B|\} \leq |A+B| \leq \min\{q^n,\, |A||B|\},
\end{equation}
and these bounds cannot be improved in general. Of particular interest is to determine under what conditions the bounds in \eqref{trivial_lower_upper_bound_sumset} can be sharpened; for example, establishing  growth of the form  
\begin{equation*}
    |A+A|\gtrsim |A|^{1+\varepsilon}
\end{equation*}
for some $\varepsilon>0$.

In the next result, we show that such an improvement can be obtained using the $L^p$ averages approach. In particular, we focus on the $L^4$ averages and note that the $L^2$ averages, for example,   cannot yield such a result. Indeed, if $A$ is an arithmetic progression, then $|A + A| \approx |A|$, but $A$ is a $(2, \tfrac{1}{2})$-Salem set (that is, $A$ has optimal Fourier analytic behaviour in an $L^2$ sense).  We give the proof in this case as a simple example exhibiting how the $L^4$ average can be used.  A more general result concerning $k$-fold sumsets of distinct sets and general $L^p$ averages can be found in \cite[Theorem 6.1]{fraserfinite}; see also \cite[Lemma 3.1]{imp}.

\begin{thm} \label{sumsets}
Let $A\subseteq \mathbb{F}_q^n$ be a $(4,s)$-Salem set. Then
\begin{equation*}
    |A+A| \gtrsim \min\left\{q^{n},\, |A|^{4s}\right\}.
\end{equation*}
In particular, if $s=\frac{1}{2}$, then 
\begin{equation*}
    |A+A| \approx \min\left\{q^{n},\, |A|^{2}\right\},
\end{equation*}
and we obtain the optimal additive growth. Moreover, as long as $s>\frac{1}{4}$, we obtain some improvement on the trivial lower bound. 
\end{thm}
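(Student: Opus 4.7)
The plan is to apply Cauchy--Schwarz to the representation function and then use Plancherel to express the resulting $L^2$ quantity in terms of an $L^4$ Fourier norm, at which point the $(4,s)$-Salem hypothesis does the work. Concretely, I would introduce the representation function $r : \mathbb{F}_q^n \to \mathbb{N}_0$ defined by
\[
r(x) \coloneqq |\{(a,a') \in A \times A : a+a' = x\}| = (A*A)(x),
\]
which is supported exactly on $A+A$ and satisfies $\sum_x r(x) = |A|^2$. By Cauchy--Schwarz applied to $r \cdot \mathbf{1}_{A+A}$,
\[
|A|^4 = \Bigl(\sum_{x \in A+A} r(x)\Bigr)^2 \leq |A+A| \sum_x r(x)^2.
\]

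The next step is to bound $\sum_x r(x)^2$ using Fourier analysis. Since $\widehat{A*A} = \widehat{A}^{\,2}$ (a short direct calculation from the definition of the Fourier transform) and since Plancherel's theorem \eqref{Plancherel's thm} gives $\sum_x r(x)^2 = q^{-n}\sum_\xi |\widehat{r}(\xi)|^2$, we obtain
\[
\sum_x r(x)^2 = q^{-n}\sum_{\xi \in \mathbb{F}_q^n} |\widehat{A}(\xi)|^4 = q^{-n}|A|^4 + \lVert \widehat{A}\rVert_4^4,
\]
where the $\xi = 0$ term has been split off because it is excluded in the definition \eqref{p-norm of FT} of $\lVert \widehat{A}\rVert_p$. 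The $(4,s)$-Salem hypothesis then yields $\lVert \widehat{A}\rVert_4^4 \lesssim |A|^{4(1-s)}$.

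Combining these two inputs,
\[
|A+A| \gtrsim \frac{|A|^4}{q^{-n}|A|^4 + |A|^{4(1-s)}} \approx \min\!\left\{q^n, \, |A|^{4s}\right\},
\]
which is the claimed bound, and the special cases $s = \tfrac{1}{2}$ and $s > \tfrac{1}{4}$ follow immediately by inspection. There is no real obstacle here: the argument is a clean second-moment (or ``$L^2$ of convolution equals $L^4$ of Fourier transform'') computation, and the only mild subtlety is being careful that the $\xi = 0$ contribution is handled separately from the $(4,s)$-Salem bound so that the two terms in the denominator are correctly identified as the saturation regime $|A|^{4s} \gtrsim q^n$ versus the growth regime $|A|^{4s} \lesssim q^n$.
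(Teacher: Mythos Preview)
Your proof is correct and essentially identical to the paper's own argument: define the representation function (the paper calls it $f$, you call it $r$), apply Cauchy--Schwarz (the paper phrases it as Jensen's inequality) to get $|A|^4 \le |A+A|\sum r^2$, use Plancherel and $\widehat{A*A}=\widehat{A}^2$ to rewrite $\sum r^2$ as $q^{-n}\sum_\xi|\widehat A(\xi)|^4$, split off $\xi=0$, and invoke the $(4,s)$-Salem bound. The only differences are notational.
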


\begin{proof}
Define $f: \mathbb{F}_q^n \to \mathbb{R}$ by
\[
f(z) \coloneqq \sum_{\substack{x,y \in \mathbb{F}_q^n\\x+y=z}} A(x)A(y).
\]
It is straightforward to check that
\begin{equation} \label{summing}
\sum_{z \in \mathbb{F}_q^n} f(z) = |A|^2,
\end{equation}
and
\begin{equation}
\label{support of f}
A+A = \{z\in \mathbb{F}_q^n : f(z) \neq 0\}.
\end{equation}
By the definition of the Fourier transform, we obtain
\begin{equation}
\label{fourrr}
\widehat{ f} (\xi) = \sum_{x,y\in\mathbb{F}_q^n} \chi(-\xi \cdot (x+y)) A(x)A(y) = \widehat{A}(\xi)\widehat{A}(\xi).
\end{equation} 
Therefore,
\begin{align*}
|A|^4 &= \Bigg( \sum_{z \in \mathbb{F}_q^n} f(z) \Bigg)^2 \qquad \text{(by \eqref{summing})} \\
&\leq |A+A| \sum_{z \in \mathbb{F}_q^n}  f(z)^2 \qquad \text{(by Jensen's inequality and \eqref{support of f})} \\
&= q^{-n}|A+A| \sum_{\xi \in \mathbb{F}_q^n}  |\widehat{f} (\xi)|^2 \qquad \text{(by Plancherel’s theorem \eqref{Plancherel's thm})} \\
&= q^{-n}|A+A| \sum_{\xi \in \mathbb{F}_q^n}  |\widehat{A}(\xi)|^4   \qquad \text{(by \eqref{fourrr})} \\
&= q^{-n} |A+A|  |\widehat{A}(0)|^4   + q^{-n}|A+A|\sum_{\xi\neq 0}  |\widehat{A}(\xi)|^4 \\
&\lesssim q^{-n} |A|^4|A+A| + \|\widehat{A}\|_{4}^4 |A+A| \\
&\lesssim q^{-n}|A|^4|A+A| + |A|^{4(1-s)} |A+A|.
\end{align*}
Hence, we conclude that
\[
|A+A|\gtrsim \min\left\{q^{n},\, |A|^{4s} \right\},
\]
as required.
\end{proof}

\subsection{Distance sets} 

The distinct distances problem was introduced by Erdős \cite{erdos}. For a finite set $E\subseteq \mathbb{R}^2$, let $\Delta(E)$ denote the set of distances spanned by pairs of points of $E$, that is,
\begin{equation*}
    \Delta(E)\coloneqq\{|x-y|: x,y\in E\}.
\end{equation*}
Each distance appears in $\Delta(E)$ at most once, regardless of how many pairs of points span it; this is why we refer to $\Delta(E)$ as the set of distinct distances of $E$. The distinct distances problem asks for
\[
\min \{|\Delta(E)|: E\subseteq \mathbb{R}^2, |E|=n\}.
\]
In other words, what is the minimum number of distinct distances determined by a set of $n$ points in $\mathbb{R}^2$? 

The continuous analogue of Erd\H{o}s' distinct distances problem is called Falconer's distance problem. It asks for the smallest Hausdorff dimension of a subset $E \subseteq \mathbb{R}^d$ $(d\geq 2)$ such that the Lebesgue measure of the distance set
\[
\Delta(E)=\{|x-y|: x,y\in E\}
\]
is positive. 

One can consider the Falconer problem in vector spaces over finite fields as a discrete model of the continuous version. We define the function $\lVert \cdot \rVert: \mathbb{F}_q^n \to \mathbb{F}_q$ by  
\begin{equation*}
\lVert x \rVert \coloneqq x_1^2 + \dots + x_n^2    
\end{equation*}
for $x=(x_1,\dots,x_n) \in \mathbb{F}_q^n$. It is worth noting that this function is not a norm, and we do not impose any metric structure on $\mathbb{F}_q^n$. Nevertheless, it shares an important feature with the Euclidean norm: invariance under orthogonal transformations. 

Given $E\subseteq \mathbb F_q^n$, the \emph{distance set} of $E$ is defined as  
\begin{equation*}
    \Delta(E)\coloneqq \{ \lVert x-y\rVert: x,y\in E\} \subseteq \mathbb{F}_q.    
\end{equation*}

A well-known and notoriously difficult problem is to obtain a sharp lower bound for $|\Delta(E)|$ in terms of $|E|$. This problem was proposed by Iosevich and Rudnev in \cite{iosevich} as a finite field analogue of Falconer's problem in Euclidean space, see below. It is also closely related to the Erdős distinct distances problem over finite fields, introduced by Bourgain, Katz, and Tao in \cite{bourgain}. Consequently, this problem is often referred to as the Erdős--Falconer distance problem.

In the finite field setting, the Falconer distance problem can be formulated as follows: find the smallest exponent $\alpha > 0$ such that, for any $E \subseteq \mathbb{F}_q^n$ with $|E| \geq Cq^\alpha$, we have $|\Delta(E)| \geq cq$, where $C > 1$ is a sufficiently large constant and $0 < c \leq 1$ is a constant independent of both $q$ and $|E|$. 

\begin{conj}
\label{maindistanceconj}
Let $q$ be odd and $n$ even. If $E\subseteq \mathbb{F}_q^n$ and $|E|\geq Cq^{\frac{n}{2}}$ with $C$ sufficiently large, then $|\Delta(E)|\gtrsim q$.
\end{conj}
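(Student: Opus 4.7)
The plan is to attack Conjecture \ref{maindistanceconj} via the Fourier-analytic machinery introduced above, reducing the problem to bounds on $\widehat{E}$ combined with the fact that nonzero-radius spheres are $(\infty,\tfrac12)$-Salem sets.

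For each $t \in \mathbb{F}_q$, define the pair-counting function
\[
\nu(t) \coloneqq |\{(x,y) \in E \times E : \lVert x-y \rVert = t\}|,
\]
so that $\sum_{t \in \mathbb{F}_q} \nu(t) = |E|^2$. By Cauchy--Schwarz,
\[
|\Delta(E)| \geq \frac{\left(\sum_t \nu(t)\right)^2}{\sum_t \nu(t)^2} = \frac{|E|^4}{\sum_t \nu(t)^2},
\]
so it suffices to prove the energy bound $\sum_t \nu(t)^2 \lesssim |E|^4/q$ whenever $|E| \geq C q^{n/2}$ with $C$ large. Writing $\nu(t) = \sum_{x,y} E(x)\,E(y)\,S_t^{n-1}(x-y)$ and applying Fourier inversion \eqref{inversion} to the sphere indicator yields the decomposition
\[
\nu(t) = \frac{|E|^2\,|S_t^{n-1}|}{q^n} + q^{-n} \sum_{\xi \neq 0} \widehat{S_t^{n-1}}(\xi)\,|\widehat{E}(\xi)|^2,
\]
splitting $\nu(t)$ cleanly into a main term and a Fourier error.

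Squaring and summing in $t$, the contribution of the first term is $\approx |E|^4 / q$ since $|S_t^{n-1}| \approx q^{n-1}$ for $t \in \mathbb{F}_q^\times$; this is precisely the target bound. The cross term and the square of the error must then be absorbed. Interchanging the sum over $t$ with the sums over $\xi$ and using an explicit formula for $\sum_t \widehat{S_t^{n-1}}(\xi)\,\overline{\widehat{S_t^{n-1}}(\xi')}$ together with the $(\infty,\tfrac12)$-Salem bound $|\widehat{S_t^{n-1}}(\xi)| \lesssim q^{(n-1)/2}$ for $\xi \neq 0$, the error collapses to an expression controlled by $\sum_{\xi \neq 0}|\widehat{E}(\xi)|^2 \leq q^n |E|$ via Plancherel \eqref{Plancherel's theorem for E}. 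Executing this chain of estimates in the most direct way reproduces the Iosevich--Rudnev threshold $|E| \gg q^{(n+1)/2}$ and falls short of the conjectured $q^{n/2}$.

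The main obstacle is precisely closing the gap of a factor of $q^{1/2}$. Both ingredients used above are sharp in general: spheres saturate the $L^\infty$-Salem bound, and arbitrary sets $E$ saturate the $L^2$-Plancherel bound, so no combination of the two can do better. To reach $q^{n/2}$ one would need to either (i) invoke $L^p$ averages of $\widehat{E}$ for some $p \in (2,\infty)$, which yields nontrivial improvements only if $E$ happens to be $(p,s)$-Salem with some $s > \tfrac12$ in the spirit of this survey, or (ii) supplement the Fourier method with incidence-geometric or additive-combinatorial arguments tailored to sets whose Fourier transform is large on a thin set of frequencies. Because the conjecture is stated for \emph{arbitrary} $E$ with no Fourier-analytic hypothesis, neither refinement applies unconditionally, and this is precisely why Conjecture \ref{maindistanceconj} remains open for all even $n \geq 2$.
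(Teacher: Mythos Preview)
The statement you were asked to prove is a \emph{conjecture}, and the paper does not supply a proof of it; it is stated as open and followed only by the partial result of Iosevich--Rudnev (Theorem~\ref{IosvichRudnev}) at threshold $q^{(n+1)/2}$ and by Theorem~\ref{distancemain}, which resolves it conditionally for $(4,\tfrac12)$-Salem sets. Your write-up correctly recognizes this: you outline the standard energy/Cauchy--Schwarz reduction, the Fourier decomposition of $\nu(t)$ via the sphere, and you explain accurately why combining the sharp $L^\infty$ bound on $\widehat{S_t^{n-1}}$ with the sharp $L^2$ bound on $\widehat E$ (Plancherel) only reaches $|E|\gtrsim q^{(n+1)/2}$. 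That is exactly the state of affairs the paper reports, so there is nothing to compare your argument against.

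One small correction to your discussion of possible refinements. In (i) you say the $L^p$ route helps only if $E$ is $(p,s)$-Salem with $s>\tfrac12$. The paper's Theorem~\ref{distancemain} shows that $(4,\tfrac12)$-Salem, i.e.\ $s=\tfrac12$ at $p=4$, already gives $|\Delta(E)|\gtrsim q$ once $|E|\geq Cq^{n/2}$. This is still a nontrivial hypothesis on $E$ (the universal bound at $p=4$ is only $s=\tfrac14$), so your overall conclusion that the conjecture remains open for arbitrary $E$ stands, but the threshold in $s$ you quoted is off by the endpoint.
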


Iosevich--Rudnev \cite{iosevich} also made some progress towards the above conjecture by establishing the following result. 

\begin{thm}[Iosevich--Rudnev]
\label{IosvichRudnev}
Let $E\subseteq \mathbb{F}_q^n$ with $n\geq 2$. If $|E|>4q^{\frac{n+1}{2}}$, then $\Delta(E)=\mathbb{F}_q$.
\end{thm}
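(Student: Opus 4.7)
The plan is to show the equivalent statement that the counting function
\[
\nu(t) \coloneqq |\{(x,y) \in E \times E : \lVert x-y \rVert = t\}|
\]
is strictly positive for every $t \in \mathbb{F}_q$, which is enough since $\nu(t) > 0$ precisely when $t \in \Delta(E)$. The case $t = 0$ is immediate: taking $x = y$ gives $\nu(0) \geq |E| > 0$, so $0 \in \Delta(E)$ regardless of any hypothesis on $|E|$. Hence the real content is to show $\nu(t) > 0$ for every $t \in \mathbb{F}_q^{\times}$.

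First I would rewrite $\nu(t)$ in Fourier-analytic language. Letting $S_t$ denote the indicator of the sphere $S_t^{n-1}$, we have $\nu(t) = \sum_{x,y \in \mathbb{F}_q^n} E(x) E(y) S_t(x-y)$, and applying the Fourier inversion identity \eqref{inversion} to $S_t$ unpacks this into
\[
\nu(t) = q^{-n} \sum_{\xi \in \mathbb{F}_q^n} \widehat{S_t}(\xi)\, |\widehat{E}(\xi)|^2.
\]
I would then split off the $\xi = 0$ contribution. Since $\widehat{S_t}(0) = |S_t^{n-1}|$, the formulas \eqref{size_of_sphere_even} and \eqref{size_of_sphere_odd} give $|S_t^{n-1}| = q^{n-1} + O(q^{(n-1)/2})$ for $t \neq 0$, so the $\xi = 0$ term produces a main contribution of size $\approx q^{-1}|E|^2$.

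For the error term $q^{-n} \sum_{\xi \neq 0} \widehat{S_t}(\xi)\, |\widehat{E}(\xi)|^2$, I would use the fact that $S_t^{n-1}$ is an $(\infty,\tfrac{1}{2})$-Salem set (the proposition immediately after the lemma on sphere cardinalities), so that $|\widehat{S_t}(\xi)| \lesssim q^{(n-1)/2}$ uniformly in $\xi \neq 0$. Pulling this supremum out of the sum and invoking Plancherel \eqref{Plancherel's theorem for E} bounds the remaining sum by $q^n|E|$, yielding an error term of order $q^{(n-1)/2}|E|$. Combining this with the main term gives
\[
\nu(t) \geq q^{-1}|E|^2 - C q^{(n-1)/2}|E|
\]
for an explicit constant $C$, and the hypothesis $|E| > 4q^{(n+1)/2}$ is exactly what is needed to make the first term dominate and force $\nu(t) > 0$.

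The main obstacle is really bookkeeping rather than any deep step: one must track the constants carefully through the $\widehat{S_t}(0)$ asymptotics (where the lower-order $\pm q^{(n-1)/2}$ or $\pm q^{(n-2)/2}$ fluctuations from \eqref{size_of_sphere_even}--\eqref{size_of_sphere_odd} must not be allowed to wipe out the main term) and through the Salem bound on $S_t^{n-1}$, in order to get precisely the factor $4$ rather than some larger constant in the threshold $|E| > 4 q^{(n+1)/2}$. Conceptually, however, the proof is a clean Fourier-analytic comparison of a main term arising from $\xi = 0$ against a Plancherel-controlled error whose only Fourier input is the Salem property of the nonzero-radius sphere.
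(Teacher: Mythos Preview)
The paper does not actually prove Theorem~\ref{IosvichRudnev}; it is quoted as a result of Iosevich--Rudnev with a citation to \cite{iosevich}, consistent with the survey nature of the paper. Your sketch is correct and is precisely the argument from \cite{iosevich}: express the pair-counting function $\nu(t)$ via Fourier inversion as $q^{-n}\sum_{\xi}\widehat{S_t}(\xi)\,|\widehat E(\xi)|^2$, isolate the $\xi=0$ main term of size $\approx q^{-1}|E|^2$, and control the remainder by combining the Salem bound $|\widehat{S_t}(\xi)|\lesssim q^{(n-1)/2}$ with Plancherel. Your assessment that the only remaining work is careful constant-tracking (through the exact sphere cardinalities and the explicit Gauss-sum bound on $\widehat{S_t}$) to recover the factor $4$ is accurate.
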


We note that in \cite{firdavs}, Theorem \ref{IosvichRudnev} was generalized to arbitrary non-degenerate quadratic forms. Moreover, to better understand the Erdős–Falconer distance problem, several generalized and modified versions of this problem have been introduced and studied; see, for example, \cite{kang_koh_rakhmonov}.

The assumption that $n$ is even in Conjecture \ref{maindistanceconj} is necessary. Indeed, it was shown in \cite{hart} that the conjecture fails for odd $n$, and that in this case the correct threshold is indeed $\tfrac{n+1}{2}$. The assumption that $q$ is odd is also necessary. For example, if $q = 2^m$ with large $m$ and $E \coloneqq S_0^{n-1}$ denotes the sphere of radius zero, then $|E| \approx q^{n-1}$ but $\Delta(E) = \{0\}$. This follows immediately from the fact that, in characteristic $2$, we have $\lVert x - y \rVert = \lVert x \rVert + \lVert y \rVert$.

Iosevich and Rudnev introduced a Fourier analytic approach to the finite field distance conjecture by studying discrete analogues of Mattila integrals. In particular, they proved that if $E \subseteq \mathbb{F}_q^n$ satisfies $|E| \geq C q^{\frac{n}{2}}$ with $C$ sufficiently large and $E$ is an $(\infty, \tfrac{1}{2})$-Salem set, then $|\Delta(E)| \gtrsim q$. Using the $L^p$ averaging approach, we can strengthen this result, obtaining in particular a solution for $(4,\tfrac{1}{2})$-Salem sets, which form a significantly larger family than $(\infty,\tfrac{1}{2})$-Salem sets.

\begin{thm} \label{distancemain}
Let $q$ be odd and suppose $E \subseteq \mathbb{F}_q^n$ satisfies $|E| \geq C q^{\frac{n}{2}}$ with $C$ sufficiently large. If $E$ is a $(4,s)$-Salem set, then
\[
|\Delta(E)| \gtrsim \min\left\{q, q^{1-n} |E|^{4s} \right\}.
\]
In particular, if $E$ is a $(4,\tfrac{1}{2})$-Salem set, i.e.,
\[
 \sum_{\xi  \in \mathbb{F}_q^n \setminus\{0\}} |\widehat E(\xi)|^{4}   \lesssim q^{n}|E|^2,
\]
then $|\Delta(E)|\gtrsim q.$
\end{thm}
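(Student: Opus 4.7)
The plan is to adapt the classical Iosevich--Rudnev Fourier analytic scheme for distance sets, swapping their $L^\infty$ Salem input for the $L^4$ average control provided by the $(4,s)$-Salem hypothesis. Set
\[
\nu(t) := |\{(x,y) \in E\times E : \lVert x-y\rVert = t\}| = \sum_{x,y \in E} S_t(x-y),
\]
where $S_t$ is the sphere of radius $t$ in $\mathbb{F}_q^n$. Since $\sum_t \nu(t) = |E|^2$ and $\Delta(E) = \{t : \nu(t) > 0\}$, Cauchy--Schwarz delivers the standard reduction
\[
|E|^4 \leq |\Delta(E)|\cdot\sum_{t \in \mathbb{F}_q}\nu(t)^2,
\]
so the whole task becomes showing $\sum_t \nu(t)^2 \lesssim q^{-1}|E|^4 + q^{n-1}|E|^{4(1-s)}$.

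Next I would Fourier-expand $S_t$ and separate the zero frequency, writing
\[
\nu(t) = q^{-n}|S_t|\,|E|^2 + g(t), \qquad g(t) := q^{-n}\sum_{\xi \neq 0}\widehat{S_t}(\xi)\,|\widehat{E}(\xi)|^2.
\]
The sphere cardinality formulae from earlier give $|S_t| = q^{n-1} + O(q^{(n-1)/2})$, so squaring and summing the main term contributes $\approx q^{-1}|E|^4$, while the mixed term vanishes to leading order because $\sum_t g(t)=0$ by character orthogonality. Everything then reduces to controlling $\sum_t g(t)^2$. Expanding the square and swapping orders, the crux becomes the inner sum $\sum_{t \in \mathbb{F}_q} \widehat{S_t}(\xi)\,\overline{\widehat{S_t}(\eta)}$, which I would evaluate directly: using orthogonality to replace $\mathbf{1}[\lVert x\rVert = \lVert y\rVert]$ by a character sum over an auxiliary variable $u$, and completing the square in the Gauss-type sum $\sum_x \chi(u\lVert x\rVert - \xi\cdot x) = \gamma_u^n \chi(-\lVert\xi\rVert/(4u))$ (with $|\gamma_u|^2 = q$ for $u \neq 0$), the inner sum evaluates to $(q-1)q^{n-1}$ when $\lVert\xi\rVert = \lVert\eta\rVert$ and to $-q^{n-1}$ otherwise. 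Discarding the favourable negative off-diagonal contribution and then Cauchy--Schwarzing over each level set $\{\xi : \lVert\xi\rVert = r\}$ (of size $\lesssim q^{n-1}$) closes the chain via
\[
\sum_t g(t)^2 \;\lesssim\; q^{-n}\cdot q^{n-1}\sum_{\xi \neq 0}|\widehat{E}(\xi)|^4 \;=\; q^{n-1}\lVert\widehat{E}\rVert_4^4 \;\lesssim\; q^{n-1}|E|^{4(1-s)},
\]
where the $(4,s)$-Salem hypothesis enters in the last step, exactly yielding the bound needed.

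The delicate point is the Gauss sum evaluation of $\sum_t \widehat{S_t}(\xi)\overline{\widehat{S_t}(\eta)}$: a crude Cauchy--Schwarz using only the pointwise Salem bound $|\widehat{S_t}(\xi)| \lesssim q^{(n-1)/2}$ would cost an extra factor of $q$ and produce the weaker estimate $|\Delta(E)| \gtrsim \min\{q, q^{-n}|E|^{4s}\}$. It is the negative off-diagonal cancellation in the exact formula that upgrades this to the $q^{1-n}$ exponent stated in the theorem, so this is the step where I would expect to do the most careful work. The rest of the argument is a fairly routine combination of Plancherel-type identities and the $(4,s)$-Salem hypothesis.
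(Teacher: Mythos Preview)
Your proposal is correct and follows exactly the approach behind the referenced proof (Theorem 9.3 in \cite{fraserfinite}, built on the Iosevich--Rudnev scheme from \cite{iosevich}): the Cauchy--Schwarz reduction to $\sum_t \nu(t)^2$, the Fourier decomposition $\nu(t) = q^{-n}|S_t|\,|E|^2 + g(t)$, the exact Gauss-sum evaluation of $\sum_t \widehat{S_t}(\xi)\overline{\widehat{S_t}(\eta)}$, and the final Cauchy--Schwarz on sphere level sets to bring in $\|\widehat{E}\|_4^4$. Your identification of the Gauss-sum step as the crux---where the negative off-diagonal term saves a factor of $q$ over the crude pointwise bound---is exactly right.
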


\begin{proof}
See Theorem 9.3 in \cite{fraserfinite} and also estimates  from \cite{iosevich}.
\end{proof}

\subsection{Exceptional projections}

Marstrand's projection theorem is one of the most fundamental results in fractal geometry, see \cite{projectionss} for more background on the theorem and its many variants. It states that for a Borel set $E \subseteq \mathbb{R}^n$ with Hausdorff dimension $\hd E$, the Hausdorff dimension of the orthogonal projection of $E$ onto almost all $k$-dimensional subspaces is $\min\{\hd E,k\}$. Due to the work of Mattila, Falconer, Bourgain, Peres--Schlag, and others, we know the following refinement of Marstrand's theorem, stated in a form due to Mattila \cite{mattila75} and Peres--Schlag \cite{peres}. For a Borel set $E \subseteq \mathbb{R}^n$,
\begin{equation} 
\label{marstrand}
\hd \{ V \in G(k,n) : \hd \pi_V(E) \leq u \} \leq k(n-k) + u-\max\{\hd E ,k\}
\end{equation}
for all $0 \leq u < \min\{\hd E,k\}$ such that the right-hand side is non-negative. Here, $\pi_V(E)$ denotes the orthogonal projection of $E$ onto $V$, and $G(k,n)$ is the Grassmannian
manifold consisting of all $k$-dimensional linear subspaces of $\mathbb{R}^n$.

One can consider Marstrand's projection theorem in the setting of finite fields, and the appropriate analogue of \eqref{marstrand} is the following: For $E\subseteq \mathbb{F}_q^n$, 
\begin{equation} 
\label{marstrandfinite}
    |\{V\in G(k,n): |\pi_V(E)|\leq u\}|\lesssim  \frac{q^{k(n-k)} u }{\max\{ |E|, q^{k}\}}
\end{equation}
for all $0 \leq u \leq q^{-\eps} \min\{ |E|, q^k\}$ for some $\eps>0$. Here, $G(k, n)$ again denotes the set of all $k$-dimensional linear subspaces of $\mathbb{F}_q^n$, and $\pi_V(E)$ is the projection of $E \subseteq \mathbb{F}_q^n$ onto the subspace $V$ of $\mathbb{F}_q^n$ (for the precise definition of projection in $\mathbb{F}_q^n$, see Definition \ref{projdef}).

We emphasize that in the finite field setting, $G(k,n)$ is directly related to the combinatorial object known as the \emph{Gaussian binomial coefficient} or \emph{$q$-binomial coefficient}, which we define below.

\begin{defn}
\label{Gaussian binomial}
    Let $k,n\in \mathbb{N}_0$, and let $q$ be a power of a prime. The Gaussian binomial coefficient, or $q$-binomial coefficient, is defined as
    \begin{equation*}
            \binom{n}{k}_q\coloneqq\begin{cases}
\dfrac{(q^n-1)(q^n-q)\dots(q^n-q^{k-1})}{(q^k-1)(q^k-q)\dots(q^k-q^{k-1})}, & \text{if } k\leq n,\\
0, & \text{if } k>n. 
\end{cases}
    \end{equation*}
    Note that $\binom{n}{0}_q=1$ because both the numerator and the denominator are empty products. 
\end{defn}

The following lemma demonstrates the relationship between $G(k, n)$ and $\binom{n}{k}_q$.

\begin{lma}
\label{relation between G(k,n) and GBC}
Let $k,n\in \mathbb{N}_0$. If $0\leq k\leq n$, then
\begin{equation}
\label{size of G(k,n)}
|G(k,n)|=\binom{n}{k}_q.    
\end{equation}
\end{lma}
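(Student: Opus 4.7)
The plan is to prove the identity by a standard double-counting argument: count the number of ordered bases of $k$-dimensional subspaces of $\mathbb{F}_q^n$ in two ways.

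First, I would count the number $N$ of ordered linearly independent $k$-tuples $(v_1, \dots, v_k)$ of vectors in $\mathbb{F}_q^n$. The vector $v_1$ can be any nonzero element of $\mathbb{F}_q^n$, giving $q^n - 1$ choices. Given $v_1, \dots, v_{i-1}$ linearly independent, the vector $v_i$ must lie outside the $(i-1)$-dimensional subspace $\mathrm{span}(v_1, \dots, v_{i-1})$, which contains exactly $q^{i-1}$ elements, leaving $q^n - q^{i-1}$ choices. Multiplying through, I get
\begin{equation*}
N = (q^n - 1)(q^n - q) \cdots (q^n - q^{k-1}).
\end{equation*}

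Next, I would partition the collection of ordered linearly independent $k$-tuples according to the subspace they span. Each such $k$-tuple is an ordered basis of a unique $k$-dimensional subspace $V \in G(k,n)$, and conversely every ordered basis of $V$ arises this way. By the same counting argument applied inside the $k$-dimensional space $V$ itself, the number of ordered bases of $V$ equals
\begin{equation*}
M \coloneqq (q^k - 1)(q^k - q) \cdots (q^k - q^{k-1}),
\end{equation*}
independently of $V$. Therefore $N = |G(k,n)| \cdot M$, and dividing yields $|G(k,n)| = N/M = \binom{n}{k}_q$ as in Definition \ref{Gaussian binomial}.

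There is no real obstacle here; the argument is elementary linear algebra over $\mathbb{F}_q$. The only minor technical point is handling the degenerate cases $k = 0$ (where $G(0, n) = \{\{0\}\}$ has one element, matching the empty-product convention $\binom{n}{0}_q = 1$) and $k = n$ (where $G(n,n) = \{\mathbb{F}_q^n\}$ and both numerator and denominator in the $q$-binomial coincide), both of which are easily verified directly.
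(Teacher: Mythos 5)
Your proof is correct and is the standard double-counting argument for this identity; the paper itself states the lemma without proof (it is a well-known fact), so there is no conflict with the paper's approach, and your handling of the degenerate cases $k=0$ and $k=n$ is appropriate.
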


There are many other identities connecting $G(k,n)$ and $\binom{n}{k}_q$, which are used extensively to prove Marstrand's projection theorem in finite fields (Theorem \ref{main projection theorem}); for example, see Lemma 2.3 in \cite{fraser_rakhmonov_1}. Next, we formally define a \emph{projection} in $\mathbb{F}_q^n$.

\begin{defn} \label{projdef}
    Let $V$ be a subspace of $\mathbb{F}_q^n$ and $E\subseteq \mathbb{F}_q^n$. The projection of $E$ onto $V$ is defined as
    \begin{equation*}
        \pi_V(E)\coloneqq \{x+V^\perp: x\in \mathbb{F}_q^n, (x+V^\perp)\cap E\neq \varnothing\}.
    \end{equation*}
\end{defn}

We are interested in estimating the cardinality of the \emph{exceptional set}, defined as
\begin{equation*}
    \{V\in G(k,n): |\pi_V(E)|\leq u\}
\end{equation*}
for $u>0$. The case of interest is $u < \min \{q^k, |E|\}$, because otherwise the size of the exceptional set is simply $|G(k,n)|\approx q^{k(n-k)}$. We now have all the ingredients to formulate the main result of this subsection. This result is a finite field analogue of a projection theorem obtained in \cite{anaproj}.

\begin{thm}
\label{main projection theorem}    
Let $p\in [2,+\infty)$, $s\in [0,1]$, and $E\subseteq \mathbb{F}_q^n$ be a nonempty $(p,s)$-Salem set. If $0<u\leq \frac{1}{4} q^{\frac{2k}{p}}$, then 
\begin{equation*}
    |\{V\in G(k,n): |\pi_V(E)|\leq u\}|\lesssim_{p,s} u^{\frac{p}{2}}q^{k(n-k)}|E|^{-ps}.
\end{equation*}
\end{thm}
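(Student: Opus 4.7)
The plan is a double-counting argument: first translate the condition $|\pi_V(E)| \leq u$ into a Fourier-analytic inequality for $\widehat{E}$ restricted to $V$, and then sum over all exceptional $V$ in order to exploit the $(p,s)$-Salem hypothesis.

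First, I would derive a Fourier-analytic lower bound for $|\pi_V(E)|$. Let $E_V(x) \coloneqq |E \cap (x + V^\perp)|$, which is the convolution $\mathbf{1}_E * \mathbf{1}_{V^\perp}$. Since $V^\perp$ is a subspace, a direct calculation gives $\widehat{\mathbf{1}_{V^\perp}}(\xi) = |V^\perp|\,\mathbf{1}_V(\xi)$, and hence $\widehat{E_V}(\xi) = |V^\perp|\,\widehat{E}(\xi)\,\mathbf{1}_V(\xi)$. The function $E_V$ is constant on each coset of $V^\perp$, so its support has size $|V^\perp|\cdot|\pi_V(E)|$ and $\sum_x E_V(x) = |V^\perp|\,|E|$. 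Applying Cauchy--Schwarz over the support of $E_V$ and then Plancherel's theorem \eqref{Plancherel's thm} yields
\[
|\pi_V(E)| \;\geq\; \frac{q^{k}\,|E|^2}{\sum_{\xi \in V} |\widehat{E}(\xi)|^2}.
\]

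Next, I would convert the exceptional assumption into an $L^p$ inequality. If $|\pi_V(E)| \leq u$, then the above gives $\sum_{\xi \in V}|\widehat{E}(\xi)|^2 \geq q^{k}|E|^2/u$. Isolating the $\xi = 0$ term (which equals $|E|^2$) and using $u \leq \tfrac{1}{4}q^{2k/p} \leq \tfrac{1}{4}q^{k}$ (valid since $p \geq 2$) to absorb the resulting $-|E|^2$, I obtain
\[
\sum_{\xi \in V \setminus \{0\}} |\widehat{E}(\xi)|^2 \;\gtrsim\; \frac{q^{k}\,|E|^2}{u}.
\]
A direct application of Hölder's inequality on $V \setminus \{0\}$ (whose cardinality is $\approx q^k$), with exponents $p/2$ and $p/(p-2)$, then upgrades this to
\[
\sum_{\xi \in V \setminus \{0\}} |\widehat{E}(\xi)|^p \;\gtrsim\; \frac{q^{k}\,|E|^p}{u^{p/2}}.
\]

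The final step is the double counting. Let $\mathcal{E}$ denote the exceptional set. Summing the previous inequality over $V \in \mathcal{E}$ and swapping the order of summation gives
\[
|\mathcal{E}|\cdot \frac{q^{k}\,|E|^p}{u^{p/2}} \;\lesssim\; \sum_{\xi \neq 0} |\widehat{E}(\xi)|^p \cdot \bigl|\{V \in G(k,n) : \xi \in V\}\bigr|.
\]
The inner incidence count equals the Gaussian binomial $\binom{n-1}{k-1}_q \approx q^{(k-1)(n-k)}$, and the $(p,s)$-Salem assumption \eqref{(p,s) Salem set} gives $\sum_{\xi \neq 0}|\widehat{E}(\xi)|^p \lesssim q^n |E|^{p(1-s)}$. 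Combining these and using the identity $(k-1)(n-k) + (n-k) = k(n-k)$ produces the claimed bound. The main obstacle is the first step: one has to carefully track the normalizations in the convolution and Plancherel computation so that exactly the factor $q^k$ emerges in the numerator, since this constant is what allows Hölder's inequality to deliver the sharp exponent $u^{p/2}$ in the final estimate.
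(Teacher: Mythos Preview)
Your argument is correct. The three steps—Cauchy--Schwarz/Plancherel lower bound $|\pi_V(E)| \geq q^k|E|^2/\sum_{\xi\in V}|\widehat{E}(\xi)|^2$, H\"older to pass from $L^2$ to $L^p$ on $V\setminus\{0\}$, and double counting via $|\{V\in G(k,n):\xi\in V\}|=\binom{n-1}{k-1}_q\approx q^{(k-1)(n-k)}$—combine exactly as you describe, and the algebra $(k-1)(n-k)+n-k=k(n-k)$ closes the estimate. Two minor remarks: the identity $\widehat{\mathbf{1}_{V^\perp}}=|V^\perp|\mathbf{1}_V$ uses $(V^\perp)^\perp=V$, which holds for all subspaces since the dot product is non-degenerate on $\mathbb{F}_q^n$ even though $V\cap V^\perp$ may be nonzero; and your absorption of the $\xi=0$ term only uses the weaker constraint $u\leq\tfrac14 q^k$, so the hypothesis $u\leq\tfrac14 q^{2k/p}$ is stronger than what your proof actually needs.

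As for comparison with the paper: this survey does not include a proof but simply cites Theorem~3.4 of \cite{fraser_rakhmonov_1}. Your scheme is precisely the natural one and almost certainly coincides with what is done there; in particular, the paper's introduction advertises ``a generalization of Plancherel's theorem for subspaces'' as an auxiliary result, which is exactly your first-step identity relating $|\pi_V(E)|$ to $\sum_{\xi\in V}|\widehat{E}(\xi)|^2$.
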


\begin{proof}
    See Theorem 3.4 in \cite{fraser_rakhmonov_1}.
\end{proof}

The fact that the upper bound in Theorem \ref{main projection theorem} depends on $p$ allows one to optimize it by choosing the best $p$ from the allowed range (i.e., such that $u \leq\frac{1}{4} q^{\frac{2k}{p}}$).  This gives Theorem \ref{main projection theorem}   significant flexibility.  For example, by setting $(p,s) = (2, \frac{1}{2})$ in Theorem \ref{main projection theorem}, which is possible since any set is a $(2, \frac{1}{2})$-Salem set, and following Chen's argument \cite{chen}, we obtain \eqref{marstrandfinite} in full generality. This result generalizes a result of Chen \cite{chen} to the case of non-prime fields and also recovers a recent result by Bright and Gan \cite{bright}. However, the optimal $p$ in  Theorem \ref{main projection theorem} may not be $p=2$ and so we often obtain a strengthening of \eqref{marstrandfinite}, at least in cases where good $L^p$ bounds hold for the Fourier transform of $E$. More precisely, suppose $|E| \approx q^\alpha$ for some $\alpha \in (0,n)$ and $u \lesssim q^\beta$ for some $\beta <\min\{k,\alpha\}$. Then Theorem \ref{main projection theorem} gives an asymptotically stronger estimate than \eqref{marstrandfinite} whenever the right-hand side of \eqref{marstrandfinite} is a positive power of $q$ and $E$ is a $(p,s)$-Salem set for some $2<p<\frac{2k}{\beta}$ with
\[
s>\frac{\beta(\frac{p}{2}-1)+\max\{\alpha,k\}}{p\alpha} =\begin{cases}
\frac{\beta}{2\alpha}(1-\frac{2}{p})+\frac{1}{p}, & \text{if } \alpha \geq k, \\
\frac{\beta}{2\alpha}(1-\frac{2}{p})+\frac{k}{p\alpha}, & \text{if } \alpha < k.
\end{cases}
\]

The case $\alpha \geq k$ is particularly appealing  because  \emph{all} $E\subseteq \mathbb{F}_q^n$ are $(p,\frac{1}{p})$-Salem sets for all $p \in [2,\infty]$. Therefore, any improvement over the trivial bound $s \geq \frac{1}{p}$ yields an improvement over \eqref{marstrandfinite} for sufficiently small $\beta$, provided $k \leq \alpha <k(n-k)$.

\subsection{Fourier restriction}

Suppose we have a nonzero, finite, compactly supported Borel measure $\mu$ on $\mathbb{R}^n$. The famous \emph{restriction problem} asks when it is meaningful to restrict the Fourier transform of a function to the support of $\mu$. Interesting cases include when $\mu$ is the surface measure on the sphere, cone, or paraboloid.  

We focus on the $L^2$ theory, where the influential Stein--Tomas restriction theorem provides estimates in terms of the Fourier decay and scaling properties of $\mu$. The  version we state here is due to Bak--Seeger \cite{BS11}.

\begin{thm}[Stein--Tomas]
    Let $\mu$ be a nonzero, finite, compactly supported Borel measure on $\mathbb{R}^n$, and let $0<\alpha,\beta<n$. Suppose that for all $x \in \mathbb{R}^n$ and all $\delta>0$, 
    \begin{equation*}
        \mu(B(x,\delta)) \lesssim \delta^{\alpha},
    \end{equation*}
    and for all $\xi \in \mathbb{R}^n$,
    \begin{equation*}
        |\widehat{\mu}(\xi)| \lesssim |\xi|^{-\frac{\beta}{2}}.
    \end{equation*}
    Then 
    \begin{equation}
    \label{extension estimate}
        \lVert \widehat{f\mu} \rVert_{L^r(\mathbb{R}^n)} \lesssim_{r,\alpha,\beta} \lVert f \rVert_{L^2(\mu)}
    \end{equation}
    holds for all functions $f \in L^2(\mu)$ and all $r \geq 2 + \frac{4(n-\alpha)}{\beta}$.
\end{thm}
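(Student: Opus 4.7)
The plan is to adapt the classical Stein--Tomas $TT^*$ scheme combined with a dyadic decomposition of $\widehat{\mu}$, reaching the endpoint exponent via the Bak--Seeger refinement based on Lorentz-space real interpolation. Setting $Tf \coloneqq \widehat{f\mu}$, duality and the $TT^*$ identity $TT^*g = g * \widehat{\mu}$ reduce the desired estimate \eqref{extension estimate} to the convolution bound
\[
\|g * \widehat{\mu}\|_{L^r(\rn)} \lesssim \|g\|_{L^{r'}(\rn)}.
\]
Fixing a Schwartz $\phi$ supported in $\{1/2 \leq |\xi| \leq 2\}$ with $\sum_{j \geq 1}\phi(\xi/2^j) = 1$ for $|\xi| \geq 1$, I would decompose $\widehat{\mu} = K_{\mathrm{low}} + \sum_{j \geq 1} K_j$, where $K_j(\xi) \coloneqq \widehat{\mu}(\xi)\phi(\xi/2^j)$. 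The low-frequency piece $K_{\mathrm{low}}$ is smooth and compactly supported, so convolution with it is handled trivially by Young's inequality.

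For each $j \geq 1$ the hypothesized Fourier decay gives immediately $\|K_j\|_\infty \lesssim 2^{-j\beta/2}$, providing the $L^1 \to L^\infty$ bound via Young. For the $L^2 \to L^2$ bound, Plancherel reduces matters to controlling $\|\widehat{K_j}\|_\infty$, and writing $\widehat{K_j} = \mu * \Phi_j$ with $\Phi_j(x) = 2^{jn}\widehat{\phi}(2^j x)$ concentrated at scale $2^{-j}$, one applies the ball hypothesis $\mu(B(x,\delta)) \lesssim \delta^{\alpha}$ on dyadic annuli together with the Schwartz decay of $\widehat{\phi}$ to deduce $\|\widehat{K_j}\|_\infty \lesssim 2^{j(n-\alpha)}$. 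Riesz--Thorin complex interpolation with $\theta = 1 - 2/r$ then yields
\[
\|g * K_j\|_{L^r(\rn)} \lesssim 2^{j[(n-\alpha)(1-\theta) - \beta\theta/2]}\|g\|_{L^{r'}(\rn)},
\]
and a direct calculation shows the bracketed exponent is strictly negative precisely when $r > 2 + \frac{4(n-\alpha)}{\beta}$. Summing the resulting geometric series over $j$ establishes the estimate on the open range.

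The main obstacle is the endpoint $r = 2 + \frac{4(n-\alpha)}{\beta}$, at which the bracketed exponent vanishes and the dyadic sum is only logarithmically divergent. Following Bak--Seeger, I would replace complex interpolation by real interpolation in Lorentz spaces: derive restricted weak-type bounds for each dyadic piece at the endpoint, then sum them using a square-function style atomic reorganization that converts the marginal $\ell^1$ geometric series into a convergent $\ell^2$ sum over scales, together with the Lorentz embedding $L^{r,2} \hookrightarrow L^r$ (valid since $r \geq 2$) to return to the strong norm. This upgrades the open-range estimate to a strong-type bound at the endpoint and completes the proof.
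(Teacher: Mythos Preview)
The paper does not prove this theorem. It is stated as background (the classical Euclidean Stein--Tomas theorem in the Bak--Seeger form) and simply attributed to \cite{BS11}; the paper's own contributions concern the finite-field setting (Theorems~\ref{moctaoresult} and~\ref{cormain}), for which proofs are given by reference to \cite{moctao} and \cite{fraser_rakhmonov_2}. So there is no ``paper's own proof'' to compare against here.

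Your sketch is the standard Stein--Tomas argument and is correct on the open range $r > 2 + \tfrac{4(n-\alpha)}{\beta}$: the $TT^*$ reduction, dyadic decomposition of $\widehat{\mu}$, the $L^1\to L^\infty$ bound from Fourier decay, the $L^2\to L^2$ bound via $\|\mu*\Phi_j\|_\infty \lesssim 2^{j(n-\alpha)}$ from the ball condition, and the Riesz--Thorin interpolation are all fine and the arithmetic checks. The endpoint paragraph is the weak spot: ``square-function style atomic reorganization'' is not an accurate description of what Bak--Seeger do. Their argument invokes a Bourgain-type summation lemma for analytic (or dyadic) families: from the two endpoint operator norms $A_j \lesssim 2^{-j\beta/2}$ and $B_j \lesssim 2^{j(n-\alpha)}$ one obtains $\sum_j T_j : L^{r',1} \to L^{r,\infty}$ (restricted weak type) at the balancing exponent directly, and then upgrades to the strong type via interpolation with a nearby exponent, landing in $L^{r,2}\hookrightarrow L^r$. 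If you want a self-contained endpoint proof you should state and use that Bourgain interpolation lemma explicitly; as written, your endpoint step is more of a gesture than an argument.
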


Formally, the estimate \eqref{extension estimate} is an $L^2 \to L^r$ extension estimate; however, by duality, it is equivalent to the $L^{r'} \to L^2$ restriction estimate:  
\begin{equation*}
    \| \widehat f \|_{L^2(\mu)} \lesssim \| f \|_{L^{r'}(\mathbb{R}^n)},
\end{equation*}
where $r'$ is the Hölder conjugate of $r$.

Mockenhaupt and Tao \cite{moctao} proved a finite field analogue of the Stein--Tomas restriction theorem. Analogous to the classical result, their theorem provides a range based on uniform bounds for the Fourier transform of the measure.

Before stating the results, we introduce some notation and definitions. A \emph{probability measure} $\mu$ on $\mathbb{F}_q^n$ is a non-negative function that sums to 1. For $E \subseteq \mathbb{F}_q^n$, the \emph{surface measure} on $E$ is the uniform probability measure, that is, 
\[
\mu(x) \coloneqq \frac{E(x)}{|E|}.
\]

For a function $f: \mathbb{F}_q^n \to \mathbb{C}$, we define
\begin{equation*}
\|f\|_{L^r(\mathbb{F}_q^n)} \coloneqq \Bigg(\sum_{x \in \mathbb{F}_q^n} |f(x)|^r \Bigg)^{\frac{1}{r}}, 
\qquad
\|f\|_{L^r(\mu)} \coloneqq \Bigg(\sum_{x \in \mathbb{F}_q^n} |f(x)|^r \mu(x) \Bigg)^{\frac{1}{r}}.
\end{equation*}

Now we have all the ingredients to state the Mockenhaupt--Tao result (using our notation and terminology).

\begin{thm}[Mockenhaupt--Tao] \label{moctaoresult}
Let $0<\alpha<n$, and let $E \subseteq \mathbb{F}_q^n$ be such that $|E| \approx q^\alpha$. Suppose that $E$ is an $(\infty,s_\infty)$-Salem set. Then, for $\mu$ the surface measure on $E$,
\[
    \|\widehat{f\mu}\|_{L^r(\mathbb{F}_q^n)}\lesssim \|f\|_{L^2(\mu)}
\]
holds for all functions $f:\mathbb{F}_q^n\to \mathbb{C}$, provided that 
\[
r \geq  2+\frac{2(n-\alpha)}{ \alpha s_\infty} .
\]
\end{thm}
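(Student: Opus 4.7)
The plan is to adapt the classical $TT^*$-interpolation proof of the Stein--Tomas theorem to the finite field setting. Let $T : L^2(\mu) \to L^r(\mathbb{F}_q^n)$ denote the extension operator $Tf = \widehat{f\mu}$. A short calculation with Parseval's theorem \eqref{parseval} shows that the adjoint is given by $T^*g = g^\lor$ and that $TT^*g = g * \widehat{\mu}$. By the $TT^*$-identity $\|T\|_{L^2(\mu) \to L^r}^2 = \|TT^*\|_{L^{r'} \to L^r}$, the theorem reduces to establishing the convolution estimate
\begin{equation*}
    \|g * \widehat{\mu}\|_{L^r(\mathbb{F}_q^n)} \lesssim \|g\|_{L^{r'}(\mathbb{F}_q^n)}.
\end{equation*}

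To bring the Salem hypothesis into play, I would isolate the frequency at the origin: writing $\widehat{\mu} = \mathbf{1}_{\{0\}} + K_1$ with $K_1(\xi) \coloneqq \widehat{\mu}(\xi)\mathbf{1}_{\xi \neq 0}$ (using $\widehat{\mu}(0) = 1$), the contribution of $\mathbf{1}_{\{0\}}$ is trivial since $g * \mathbf{1}_{\{0\}} = g$, and because $r \geq 2 \geq r'$ the nested inclusion of counting-measure $\ell^p$-spaces yields $\|g\|_{L^r} \leq \|g\|_{L^{r'}}$. It therefore suffices to bound the convolution operator with kernel $K_1$ from $L^{r'}$ to $L^r$.

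For this I would prove two endpoint estimates and interpolate by Riesz--Thorin. The $L^1 \to L^\infty$ bound follows from Young's inequality and the $(\infty, s_\infty)$-Salem hypothesis, giving
\begin{equation*}
    \|g * K_1\|_{L^\infty} \leq \|K_1\|_{L^\infty}\|g\|_{L^1} = \frac{1}{|E|}\sup_{\xi \neq 0}|\widehat{E}(\xi)|\,\|g\|_{L^1} \lesssim q^{-\alpha s_\infty}\|g\|_{L^1}.
\end{equation*}
The $L^2 \to L^2$ bound follows from Plancherel's theorem \eqref{Plancherel's thm} and the identity $\widehat{g*K_1} = \widehat g\,\widehat{K_1}$, which together give $\|g*K_1\|_{L^2} \leq \|\widehat{K_1}\|_{L^\infty}\|g\|_{L^2}$. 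Using the inversion formula \eqref{inversion}, a direct computation yields $\widehat{K_1}(\eta) = q^n \mu(-\eta) - 1$, so $\|\widehat{K_1}\|_{L^\infty} \lesssim q^n/|E| \approx q^{n-\alpha}$ and hence $\|g*K_1\|_{L^2} \lesssim q^{n-\alpha}\|g\|_{L^2}$. Riesz--Thorin interpolation between $(1,\infty)$ and $(2,2)$ at the pair $(r', r)$, with $\theta = 2/r$, then gives
\begin{equation*}
    \|g*K_1\|_{L^r} \lesssim q^{-\alpha s_\infty\left(1 - \frac{2}{r}\right)} \cdot q^{(n-\alpha)\frac{2}{r}}\|g\|_{L^{r'}},
\end{equation*}
and the prefactor is $\lesssim 1$ precisely when $2(n-\alpha) \leq \alpha s_\infty(r - 2)$, i.e.\ $r \geq 2 + \frac{2(n-\alpha)}{\alpha s_\infty}$, which is the advertised range.

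The only genuine obstacle is bookkeeping around the $q^n$ factor in the inversion formula, which is responsible for the $q^{n-\alpha}$ growth of the $L^2 \to L^2$ norm; this factor is precisely what forces $(n - \alpha)$ into the numerator of the critical exponent and determines the balance point of the interpolation. Everything else is a straightforward transcription of the Euclidean Stein--Tomas argument into the finite field model.
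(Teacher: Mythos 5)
Your argument is correct and is essentially the same $TT^*$-plus-interpolation proof that Mockenhaupt and Tao give in their original paper (which the survey cites without reproducing the proof). The key steps — reducing to the convolution estimate for $TT^*g = g * \widehat{\mu}$, splitting $\widehat{\mu}$ into its value at the origin and the tail kernel $K_1$, bounding the $L^1 \to L^\infty$ norm of convolution with $K_1$ by $\|K_1\|_\infty \lesssim |E|^{-s_\infty} \approx q^{-\alpha s_\infty}$ via the $(\infty, s_\infty)$-Salem hypothesis, bounding the $L^2 \to L^2$ norm by $\|\widehat{K_1}\|_\infty \lesssim q^{n-\alpha}$ via Plancherel and the inversion formula, and interpolating — all check out, and the algebra at the interpolation step correctly recovers the stated range $r \geq 2 + \frac{2(n-\alpha)}{\alpha s_\infty}$. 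The only cosmetic remark is that one should perhaps note explicitly that in the finite setting all the $L^p$ spaces are finite-dimensional, so the $TT^*$ identity $\|T\|^2_{L^2(\mu)\to L^r} = \|TT^*\|_{L^{r'}\to L^r}$ and the Riesz–Thorin argument apply without any regularity caveats.
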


In \cite{fraser_rakhmonov_2}, we improved the Mockenhaupt--Tao result using the $L^p$ averages approach.  This result is a finite fields analogue of a Euclidean restriction theorem obtained in \cite{anarestriction}.

\begin{thm}  
\label{cormain}
Let $0<\alpha<n$, and let $E \subseteq \mathbb{F}_q^n$ be such that $|E| \approx q^\alpha$. Suppose that $E$ is a $(p,s)$-Salem set with $s \geq \frac{n}{p\alpha}$. Then, for $\mu$ the surface measure on $E$,
\[
    \|\widehat{f\mu}\|_{L^r(\mathbb{F}_q^n)}\lesssim \|f\|_{L^2(\mu)}
\]
holds for all functions $f:\mathbb{F}_q^n\to \mathbb{C}$, provided that 
\[
r \geq  2+\frac{(2p-2)(n-\alpha)}{ \alpha p s-\alpha} .
\]
In particular, this improves upon the Mockenhaupt--Tao range when
\[
s > s_\infty + \frac{1-s_\infty}{p},
\]
where $s_\infty$ is chosen optimally so that $E$ is an $(\infty, s_\infty)$-Salem set.
\end{thm}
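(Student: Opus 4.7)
The plan is to adapt the $TT^*$ approach of Mockenhaupt--Tao, replacing their uniform Fourier decay hypothesis with the $L^p$ average bound encoded by the $(p,s)$-Salem assumption. By duality, the extension estimate $\|\widehat{f\mu}\|_{L^r(\mathbb{F}_q^n)} \lesssim \|f\|_{L^2(\mu)}$ is equivalent to $\|TT^*\|_{L^{r'} \to L^r} \lesssim 1$, and a short computation using the convolution theorem and Fourier inversion \eqref{inversion} shows that $TT^* g = g * \widehat{\mu}$. Hence the problem reduces to bounding convolution with $\widehat{\mu}$ from $L^{r'}(\mathbb{F}_q^n)$ to $L^r(\mathbb{F}_q^n)$ with constants independent of $q$; the relevant input is
\[
\|\widehat{\mu}\|_p = \|\widehat{E}\|_p / |E| \lesssim q^{-\alpha s}.
\]

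Next, split $\widehat{\mu} = \delta_0 + \nu$ (noting $\widehat{\mu}(0) = 1$) and decompose $\nu$ dyadically into level sets $L_j := \{\xi \neq 0 : |\widehat{\mu}(\xi)| \sim 2^{-j}\}$. A Chebyshev-type argument applied to $\|\widehat{\mu}\|_p$ gives $|L_j| \lesssim 2^{jp} q^{n-\alpha ps}$, and trivially $|L_j| \leq q^n$. For the dyadic piece $\nu_j := \nu\,\chi_{L_j}$, Young's inequality (equivalently, interpolating the $L^1 * L^{r'} \to L^{r'}$ endpoint with the Plancherel-based $L^2 * L^2 \to L^\infty$ endpoint) yields
\[
\|g * \nu_j\|_{L^r(\mathbb{F}_q^n)} \lesssim 2^{-j} |L_j|^{2/r} \|g\|_{L^{r'}(\mathbb{F}_q^n)}.
\]
Summing over $j$, with the ranges of $j$ split at the critical scale where the two bounds on $|L_j|$ coincide, gives the desired convolution estimate in the stated range of $r$.

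The main obstacle is recovering the \emph{sharp} threshold $r \geq 2 + (2p-2)(n-\alpha)/(\alpha ps - \alpha)$. A crude application of Young's inequality using only the full $\|\widehat{\mu}\|_p$ bound gives $\|TT^*\|_{L^{(2p)'} \to L^{2p}} \lesssim 1$ provided $s \geq n/(p\alpha)$; interpolating with the trivial $L^2(\mu) \to L^\infty$ estimate then yields the extension estimate on the full range $r \geq 2p$. To push the threshold below $2p$ when $s > n/(p\alpha)$, one must exploit the level-set decomposition more carefully, combining the $(p,s)$-Salem bound with the always-available $(2,\tfrac{1}{2})$-Salem bound and the uniform estimate $\|\widehat{\mu}\|_\infty \leq 1$. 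The correct threshold emerges after optimising the dyadic cut-off and interpolating the resulting operator bounds; the role of the hypothesis $s \geq n/(p\alpha)$ is precisely to guarantee that the relevant geometric sums converge uniformly in $q$.

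Finally, the comparison with Mockenhaupt--Tao reduces to a direct algebraic check: the inequality $2 + (2p-2)(n-\alpha)/(\alpha ps - \alpha) < 2 + 2(n-\alpha)/(\alpha s_\infty)$ is equivalent to $(p-1)s_\infty < ps - 1$, which rearranges to $s > s_\infty + (1-s_\infty)/p$, as claimed.
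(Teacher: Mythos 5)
The $TT^*$ reduction and the Young computation giving the range $r \ge 2p$ under $s \geq n/(p\alpha)$ are correct, as is the algebraic check comparing with Mockenhaupt--Tao. But the passage from $r \geq 2p$ to the claimed sharp threshold is a genuine gap, and the mechanism you gesture at does not work.

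The dyadic scheme you describe produces the quantity $\sum_j 2^{-j}|L_j|^{2/r}$, and your available inputs on $|L_j|$ all come from norms of $\widehat{\mu}$: the $(p,s)$-Salem bound gives $|L_j|\lesssim 2^{jp}q^{n-\alpha ps}$, Plancherel (the $(2,\tfrac12)$-Salem bound) gives $|L_j|\lesssim 2^{2j}q^{n-\alpha}$, and trivially $|L_j|\leq q^n$ with $j\geq 0$. Optimising over these, the dominant contribution sits at the crossover scale $2^{j}\approx q^{\alpha s}$ and produces $q^{2n/r-\alpha s}$, i.e.\ the condition $r\geq 2n/(\alpha s)$. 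A direct check shows $2n/(\alpha s) - \big(2+\tfrac{(2p-2)(n-\alpha)}{\alpha ps-\alpha}\big) = \tfrac{2(1-s)(\alpha ps-n)}{\alpha s(ps-1)} > 0$ whenever $s>n/(p\alpha)$, so this route gives a \emph{strictly worse} range than the theorem. (For example $n=2$, $\alpha=1$, $p=4$, $s=3/4$: dyadic gives $r\geq 16/3$, but the theorem claims $r\geq 5$.) Adding the $(2,\tfrac12)$-Salem and $\|\widehat{\mu}\|_\infty\leq 1$ bounds does not help because, for $s>1/2$, Plancherel only beats the $(p,s)$ bound at scales where the trivial bound is already better, and for $s<1/2$ the analysis does not improve the exponent either.

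The missing ingredient is a second operator estimate that does not come from any $L^p$ norm of $\widehat{\mu}$. Writing $K=\widehat{\mu}-\delta_0$, Fourier inversion gives $\widehat{K}(x)=q^n\mu(x)-1$, and since $\mu(x)\leq 1/|E|\approx q^{-\alpha}$ one has $\|\widehat{K}\|_\infty\lesssim q^{n-\alpha}$. By Plancherel for the convolution, this yields $\|g*K\|_{L^2(\mathbb{F}_q^n)}\lesssim q^{n-\alpha}\|g\|_{L^2(\mathbb{F}_q^n)}$. This $L^2\to L^2$ bound encodes the density condition $|E|\approx q^\alpha$, not Fourier decay, and it is exactly what is needed as the second endpoint. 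Interpolating (Riesz--Thorin) between it and your Young estimate $\|g*K\|_{L^{2p}}\lesssim q^{n/p-\alpha s}\|g\|_{L^{(2p)'}}$, with $\theta=(2p/r-1)/(p-1)$, gives a bound $\lesssim q^{(n/p-\alpha s)(1-\theta)+(n-\alpha)\theta}\|g\|_{L^{r'}}$, and the exponent is $\leq 0$ precisely when $r\geq 2+\tfrac{(2p-2)(n-\alpha)}{\alpha ps-\alpha}$; the hypothesis $s\geq n/(p\alpha)$ ensures the Young endpoint already has a nonpositive exponent so the interpolated range is nonempty. This two-estimate interpolation, with no dyadic decomposition, is what produces the sharp threshold; your proposal never identifies the $\|\widehat{K}\|_\infty$ estimate and so cannot close the gap.
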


\begin{proof}
See Corollary 2.2 in \cite{fraser_rakhmonov_2}.
\end{proof}

The above restriction theorem has a nice application to Sidon sets. A \emph{Sidon set} $E \subseteq \mathbb{F}_q^n$ is a set in which the equation $a+b=c+d$ implies $\{a,b\} = \{c,d\}$ for every $(a,b,c,d) \in E^4$. As a consequence, if $E$ is Sidon, then $|E| \lesssim q^{\frac{n}{2}}$, but it is easy to construct Sidon sets with $|E| \approx q^{\frac{n}{2}}$. The Sidon sets we consider (i.e., $E$ with $|E| \approx q^{\frac{n}{2}}$) may not exhibit any uniform Fourier decay; see \cite[Proposition 5.2]{fraser_rakhmonov_2} for examples. Therefore, the Mockenhaupt--Tao result alone does not yield a non-trivial range for Fourier restriction.  

\begin{cor} \label{sidonrestriction}
Let $E \subseteq \mathbb{F}_q^n$ be a Sidon set with $|E| \approx q^{\frac{n}{2}}$, and let $\mu$ be the surface measure on $E$. Then 
\[
    \|\widehat{f\mu}\|_{L^8(\mathbb{F}_q^n)}\lesssim \|f\|_{L^2(\mu)}
\]
holds for all functions $f:\mathbb{F}_q^n \to \mathbb{C}$.
\end{cor}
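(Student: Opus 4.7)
The plan is to verify that $E$ is a $(4, \tfrac{1}{2})$-Salem set and then invoke Theorem \ref{cormain} with $p = 4$, $s = \tfrac{1}{2}$, and $\alpha = \tfrac{n}{2}$.

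First I would control $\|\widehat{E}\|_4$ through the convolution identity, mirroring the calculation used in the proof of Theorem \ref{sumsets}. Setting $f(z) \coloneqq \sum_{x+y=z} E(x)E(y) = r_{E+E}(z)$, where $r_{E+E}(z) \coloneqq |\{(x,y) \in E \times E : x + y = z\}|$, a direct expansion shows $\widehat{f}(\xi) = \widehat{E}(\xi)^2$. Applying Plancherel's theorem \eqref{Plancherel's thm} yields
\begin{equation*}
\sum_{\xi \in \mathbb{F}_q^n} |\widehat{E}(\xi)|^4 \;=\; \sum_\xi |\widehat{f}(\xi)|^2 \;=\; q^n \sum_z r_{E+E}(z)^2.
\end{equation*}
The Sidon hypothesis forces $r_{E+E}(z) \in \{0,1,2\}$ for every $z$, so $\sum_z r_{E+E}(z)^2 \leq 2 \sum_z r_{E+E}(z) = 2|E|^2$. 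Therefore $\sum_{\xi \neq 0} |\widehat{E}(\xi)|^4 \leq 2 q^n |E|^2$, and consequently
\begin{equation*}
\|\widehat{E}\|_4 \;\leq\; \bigl(2|E|^2\bigr)^{1/4} \;\lesssim\; |E|^{1/2},
\end{equation*}
which is exactly the $(4,\tfrac{1}{2})$-Salem condition.

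Next I would feed this into Theorem \ref{cormain}. With $|E| \approx q^{n/2}$ the relevant exponent is $\alpha = \tfrac{n}{2}$, and the admissibility condition $s \geq \tfrac{n}{p\alpha}$ reads $\tfrac{1}{2} \geq \tfrac{1}{2}$, which holds with equality. The restriction range prescribed by Theorem \ref{cormain} becomes
\begin{equation*}
r \;\geq\; 2 + \frac{(2p-2)(n-\alpha)}{\alpha p s - \alpha} \;=\; 2 + \frac{6 \cdot (n/2)}{n/2} \;=\; 8,
\end{equation*}
giving precisely the $L^8$ estimate asserted in the corollary.

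There is essentially no obstacle: the argument is a short Fourier-analytic check followed by a direct substitution into Theorem \ref{cormain}. The key conceptual point — and the reason the corollary is noteworthy — is that the $L^4$ Salem hypothesis is automatic for Sidon sets of maximal size (via the representation-function bound), even though the $(\infty, s_\infty)$-Salem hypothesis needed to apply Theorem \ref{moctaoresult} may fail badly for such sets. This is exactly the setting in which the $L^p$ averaging framework supersedes the classical Mockenhaupt--Tao bound.
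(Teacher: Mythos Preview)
Your proposal is correct and follows exactly the route the paper intends: the Sidon property bounds the additive energy, giving $\sum_{\xi}|\widehat{E}(\xi)|^4 \lesssim q^n|E|^2$ and hence the $(4,\tfrac{1}{2})$-Salem condition, after which a direct substitution of $p=4$, $s=\tfrac{1}{2}$, $\alpha=\tfrac{n}{2}$ into Theorem~\ref{cormain} yields the threshold $r=8$. This is precisely the argument behind the cited Corollary~5.1 of \cite{fraser_rakhmonov_2}, and there is nothing to add.
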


\begin{proof}
See Corollary 5.1 in \cite{fraser_rakhmonov_2}.
\end{proof}

The cardinality assumption on $E$ in the previous result is close to optimal. Indeed, suppose $E \subseteq \mathbb{F}_q^{n-1}$ is a Sidon set with $|E| \approx q^{\frac{n-1}{2}}$, and embed it as a subset of $\mathbb{F}_q^n$. Then, as shown in \cite{fraser_rakhmonov_2}, the restriction estimate fails for \emph{all} $r < \infty$.

We can also apply the above restriction theorem to the Hamming varieties, which were introduced earlier.

\begin{cor} \label{hammingresult}
Let $H_j$ be a Hamming variety in $\mathbb{F}_q^n$, and let $\mu_j$ be the surface measure on $H_j$. Then 
\begin{equation*}
    \lVert \widehat{f\mu_j}\rVert_{L^r(\mathbb{F}_q^n)} \lesssim \lVert f\rVert_{L^2(\mu_j)}
\end{equation*}
holds for all functions $f:\mathbb{F}_q^n \to \mathbb{C}$, provided that 
\[
r \geq \frac{3n-1}{n-1}.
\]
\end{cor}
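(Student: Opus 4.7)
The plan is to apply Theorem \ref{cormain} with $E = H_j$, using the sharp Salem exponents from Proposition \ref{hammingformula}, and then to optimize over the admissible $(p,s)$ pairs. Since $|H_j| = (q-1)^{n-1} \approx q^{n-1}$ we are in the regime $\alpha = n-1$, and Proposition \ref{hammingformula} tells us that $H_j$ is a $(p,s)$-Salem set exactly when $s \leq \min\{\tfrac{1}{2},\, \tfrac{1}{n-1} + \tfrac{1}{p}\}$.

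The lower bound on $r$ produced by Theorem \ref{cormain} can, with $\alpha = n-1$, be rewritten as
\[
r \geq 2 + \frac{(2p-2)(n-\alpha)}{\alpha p s - \alpha} = 2 + \frac{2(p-1)}{(n-1)(ps-1)},
\]
which is strictly decreasing in both $p$ and $s$ on the admissible region. The optimal point therefore sits at the upper corner of the Salem region: take $s = \tfrac{1}{2}$ and then choose $p$ as large as possible subject to $\tfrac{1}{2} \leq \tfrac{1}{n-1} + \tfrac{1}{p}$. For $n \geq 4$ this gives $p_\star \coloneqq \tfrac{2(n-1)}{n-3}$, while for $n = 3$ the constraint is automatic at $s = \tfrac{1}{2}$ and we let $p \to \infty$.

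I then verify the admissibility hypothesis $s \geq \tfrac{n}{p\alpha}$ of Theorem \ref{cormain}. At $s = \tfrac{1}{2}$, $p = p_\star$, $\alpha = n-1$ this reduces to $n(n-3) \leq (n-1)^2$, i.e.\ $-n \leq 1$, which always holds. Substituting into the formula for $r$, and using $p_\star - 1 = (n+1)/(n-3)$ together with $p_\star - 2 = 4/(n-3)$, yields
\[
r \geq 2 + \frac{4(p_\star - 1)}{(n-1)(p_\star - 2)} = 2 + \frac{n+1}{n-1} = \frac{3n-1}{n-1},
\]
which is exactly the target threshold; the case $n = 3$ falls out as the $p \to \infty$ limit of the same expression.

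The main conceptual step is identifying that the corner $(p_\star, \tfrac{1}{2})$ is the optimal point in the $(p,s)$ plane. Thanks to the monotonicity of the Theorem \ref{cormain} bound in both variables, this reduces to the natural guess — take the largest $s$ permitted by Proposition \ref{hammingformula}, then the largest $p$ consistent with that $s$ — and the exact numerical match with $(3n-1)/(n-1)$ then drops out of the direct calculation above.
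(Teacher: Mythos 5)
Your route is the paper's route: feed the sharp Salem exponents of Proposition~\ref{hammingformula} for $H_j$ (with $\alpha = n-1$) into Theorem~\ref{cormain} and optimize over the admissible $(p,s)$. The monotonicity observation is correct (the derivative of $(p-1)/(ps-1)$ in $p$ is $(s-1)/(ps-1)^2 < 0$), the admissibility check $s \geq n/(p\alpha)$ at the corner reduces correctly to $(n-1)^2 \geq n(n-3)$, and the algebra at $(p_\star, \tfrac12)$ with $p_\star = 2(n-1)/(n-3)$ does produce $r = (3n-1)/(n-1)$; one can equally check this along the other boundary $s = \tfrac1{n-1} + \tfrac1p$, where the bound simplifies to $r \geq 4 - 2/p$ and is minimized at $p = p_\star$.

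The one loose thread is $n=3$. Your ``let $p \to \infty$'' step gives the estimate for every $r > 4$ but not the closed endpoint $r \geq 4$ stated in the corollary, because for each finite $p$ Theorem~\ref{cormain} yields only $r \geq 2 + 2(p-1)/(p-2) > 4$, and the $\ell^r$ norms on $\mathbb{F}_q^n$ are monotone decreasing in $r$, so establishing the estimate for large $r$ does not feed back to $r = 4$. To close the endpoint, note that for $n = 3$ Proposition~\ref{hammingformula} makes $H_j$ an $(\infty,\tfrac12)$-Salem set, so Theorem~\ref{moctaoresult} (Mockenhaupt--Tao) applies directly with $\alpha = 2$, $s_\infty = \tfrac12$ and gives exactly $r \geq 4$. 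You also implicitly assume $n \geq 3$: for $n=2$ the same optimization yields only $r \geq 6$, not $5$, so the stated threshold should be read with $n \geq 3$.
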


We note that the Mockenhaupt--Tao result (Theorem~\ref{moctaoresult}) gives a weaker range for $r$, namely $r \geq 4$. In \cite{hamming}, an even better range $r \geq \frac{2(n+1)}{n-1}$ is obtained; however, it is conjectured that the sharp range is in fact $r \geq \frac{2n}{n-1}$.

\bigskip

\end{document}